\documentclass[conference]{IEEEtran}
\IEEEoverridecommandlockouts
\usepackage{amsmath,amssymb,amsfonts}
\usepackage{algorithmic}
\usepackage{graphicx}
\usepackage{textcomp}
\usepackage{xcolor}

\usepackage{graphicx} 
\usepackage{xcolor}
\usepackage{amssymb}
\usepackage{amsmath}
\usepackage{amsthm}
\usepackage{lmodern}
\usepackage[T1]{fontenc}
\usepackage{hyperref}

\usepackage{cleveref}

\bibliographystyle{plain}

\newcommand{\R}{\mathbb{R}}
\newcommand{\RP}{\mathbb{RP}^1}

\newcommand{\N}{\mathbb{N}}

\newcommand{\Disc}{\text{Disc}}

\newcommand{\Res}{\text{Res}}
\newcommand{\ord}{\text{ord}}

\newcommand{\id}{\mathrm{id}}
\newcommand{\AP}{A^*P}

\newcommand{\Rstar}{\mathbb{R}^2_*}

\theoremstyle{definition}
\newtheorem{theorem}{Theorem}
\newtheorem{example}{Example}
\newtheorem{proposition}{Proposition}
\newtheorem{lemma}{Lemma}
\newtheorem{definition}{Definition}
\newtheorem{remark}{Remark}

\newtheorem{corollary}{Corollary}

\usepackage{tikz}
\usepackage{pgfplots}
\usepackage{rwthcolors}
\newenvironment{myaxis}[1][ymin=-2.2,ymax=2.2,xmin=-2.2,xmax=2.2,x=1.25cm,y=1.25cm]{ 
  \begin{axis}[axis on top=false,
    axis line style=black,
    xlabel=$\textcolor{black!100}{x_1}$, ylabel=$\textcolor{black!100}{x_2}$,grid=none,
    xtick=\empty,
    ytick=\empty,
    yticklabels=\empty,
    xticklabels=\empty,
    #1]
}{\end{axis}}

\newenvironment{myaxis2}[1][ymin=-4.4,ymax=4.4,xmin=-4.4,xmax=4.4,x=.625cm,y=.625cm]{ 
  \begin{axis}[axis on top=false,
    axis line style=black,
    xlabel=$\textcolor{black!100}{x_1}$, ylabel=$\textcolor{black!100}{x_2}$,grid=none,
    xtick={-1,0,1},
    ytick={-1,0,1},
    yticklabels={-1,0,1},
    xticklabels={-1,0,1},
    #1]
}{\end{axis}}

\makeatletter
\newcommand{\pgfplotsdrawaxis}{\pgfplots@draw@axis}
\makeatother
\pgfplotsset{only axis on top/.style={axis on top=false, after end axis/.code={
             \pgfplotsset{axis line style=opaque, ticklabel style=opaque, tick style=opaque,
                          grid=none}\pgfplotsdrawaxis}}}
\pgfplotsset{compat = newest}

\begin{document}

\title{On Projective Delineability

\author{
    \IEEEauthorblockN{Lucas Michel\IEEEauthorrefmark{1},
    Jasper Nalbach\IEEEauthorrefmark{2}, 
    Pierre Mathonet\IEEEauthorrefmark{1},
    Naïm Zénaïdi\IEEEauthorrefmark{1},}
    \IEEEauthorblockN{
    Christopher W. Brown\IEEEauthorrefmark{3},
    Erika \'Abrah\'am\IEEEauthorrefmark{2},
    James H. Davenport\IEEEauthorrefmark{4},
    Matthew England\IEEEauthorrefmark{5}}
    \IEEEauthorblockA{}
    \IEEEauthorblockA{\IEEEauthorrefmark{1}University of Liège, \IEEEauthorrefmark{2}RWTH Aachen University,}
    \IEEEauthorblockA{\IEEEauthorrefmark{3}United States Naval Academy, \IEEEauthorrefmark{4}University of Bath, \IEEEauthorrefmark{5}Coventry University}
}
\thanks{Contact : lucas.michel@uliege.be.}%
\thanks{ P.~Mathonet, L.~Michel and N.~Zenaïdi are supported by the FNRS-DFG PDR Weaves (SMT-ART) grant 40019202. E.~Ábrahám and J.~Nalbach are supported by the Deutsche Forschungsgemeinschaft (DFG, German Research Foundation) as part of AB 461/9-1 \emph{SMT-ART}. J.~Nalbach is supported by the DFG as part of RTG 2236 \emph{UnRAVeL}. M.~England and J.~Davenport are supported by the UKRI EPSRC DEWCAD Project (grant EP/T015748/1 and EP/T015713/1 respectively). J.~Davenport is funded by the 
DFG under Germany's Excellence Strategy (EXC-2047/1-390685813). This publication is based upon work from COST Action EuroProofNet, supported by COST (European Cooperation in Science and Technology, www.cost.eu). 
This is the author's version of the work.}
}

\maketitle

\begin{abstract}
We consider cylindrical algebraic decomposition (CAD) and the key concept of delineability which underpins CAD theory.  We introduce the novel concept of projective delineability which is easier to guarantee computationally.  We prove results about this which can allow reduced CAD computations.
\end{abstract}

\begin{IEEEkeywords}
Cylindrical Algebraic Decomposition, Non-linear Real Arithmetic, (Projective) Delineability
\end{IEEEkeywords}

\section{Introduction}

\emph{Cylindrical algebraic decomposition (CAD)} is a well-known tool for computing with polynomial constraints over the reals, developed in the 1970s by Collins \cite{collins1975} as the basis for quantifier elimination (QE) over real-closed fields. CAD has been continuously researched, improved, and applied in the half century since its inception. 

Recently, CAD theory has been adapted and specialized for \emph{satisfiability modulo theories} (SMT) solvers which check the satisfiability of Boolean combinations of real polynomial constraints: e.g. the \emph{NuCAD} \cite{nucad}, the \emph{NLSAT} \cite{jovanovic2012}, and the \emph{CAlC} \cite{abraham2021} algorithms.  
The corresponding \verb+QF_NRA+ category of the SMT-LIB benchmarks contains instances of such problems originating from e.g. verification systems, the life sciences and economics.  

Central to CAD is the notion of \emph{delineability} of a polynomial over a connected set (\emph{cell}). Roughly speaking, an $n$-variate polynomial $P$ is delineable over a cell $S \subseteq \R^{n-1}$ if the set of zeros of $P$ in the cylinder $S\times\R$ is the disjoint union of the graphs of continuous real-valued functions of $n-1$ variables (we call these \emph{real root functions}).  These graphs cut the cylinder $S\times\R$ into cells which the sign of $P$ is invariant within. We order these cells bottom-to-top to form a \emph{stack}. The key observation is that the number of cells, their ordering, and the sign of $P$ within cells, are all the same no matter which point of $s \in S$ we are over.  Thus the presence of delineability allows for construction at a sample point to be concluded valid over the cell.

We are particularly interested by the \emph{single cell construction} \cite{NASBDE24}, which is used as a subroutine by the three SMT algorithms mentioned earlier \cite{nucad, jovanovic2012}, \cite{abraham2021}.  This uses CAD projection theory to compute a single CAD cell around a given sample point within which a set of polynomials is sign-invariant. Compared to traditional CAD algorithms, the single cell construction allows us to use the sample point to detect which projection polynomials are not relevant for the cell under construction, and omit them in the computation. 

In this paper we introduce the new notion of \emph{projective delineability}: easier to  guarantee computationally than delineability but still useful.  Our new notion allows us to further omit leading coefficients during single cell construction in cases where projective delineability of a polynomial is sufficient instead of its delineability.  Indeed, projective delineability is equivalent to delineability in the case where the leading coefficient of $P$ never vanishes, but may still be guaranteed when the leading coefficient has zeroes (see Proposition \ref{prop:linkDelProjdel} for a more precise statement). 

\section{Delineability and Leading Coefficients}

A CAD is a decomposition of $\R^n$, consisting of cells which maintain properties for the input; usually that a finite family of input polynomials $\mathcal{F}$ in $n$ variables are sign-invariant within each cell. These cells are \emph{cylindrically arranged}, meaning that the projection of a CAD of $\R^n$ is a CAD of $\R^{n-1}$, and over each cell $S$ of the latter, the cylinder $S \times \R$ is decomposed by cells of the former. 
A CAD may be computed recursively: for obtaining the CAD of $\R^n$, we compute a \emph{projection} $\mathcal{F}'$ in $n-1$ variables of $\mathcal{F}$ such that $\mathcal{F}'$ defines a suitable CAD of $\R^{n-1}$. The following notion formalizes the property that the CAD of $\R^{n-1}$ needs to fulfil if we are to extend it to a CAD of $\R^n$.

\begin{definition}[\cite{collins1975, McCallum1988}]\label{def:del}
    We say that $P \in \R[\textbf{x};x_n]$ is \emph{delineable} on a subset $S$ of $\R^{n-1}$ if there exist $k \in \N$ and some continuous functions $\theta_1, \ldots, \theta_k: S \to \R$ such that:
        \begin{enumerate}
            \item the set of zeros of $P$ in $S \times \R$ is the disjoint union $\text{Graph}(\theta_1) \sqcup \ldots \sqcup \text{Graph}(\theta_k)$;
            \item for all $l \in \{1,\ldots, k\},$ there exists $m_l \in \N^*$ such that for all $\textbf{x} \in S$, the multiplicity of the root $\theta_l(\textbf{x})$ is $m_l$. 
        \end{enumerate}
        We say that $\theta_1,\ldots,\theta_k$ are the \emph{real root functions} of $P$ over $S$. 
        The graph of $\theta_l$ is called a real $P$-\emph{section} over $S$.
\end{definition}

The computation of the projection $\mathcal{F}'$ has been an active area of research. Our work is based on
the projection operator of McCallum \cite{McCallum1998} with the optimization by Brown \cite{Brown2001}: $\mathcal{F}'$ contains  the discriminant and leading coefficient of each $P \in \mathcal{F}$ and the resultant of each pair $P,Q \in \mathcal{F}$.

The new notion of projective delineability is designed for the \emph{single cell construction} \cite{NASBDE24} discussed above.  
Here the cell is already constructed using a partial CAD projection (given the sample point we identify that some projection polynomials for the full CAD are not needed for the region of interest). Prior work reduced the amount of resultants and some discriminants. Projective delineability allows detecting irrelevant leading coefficients also.

\begin{example}
    \label{ex:scc}

    Consider the polynomials $P = x_1^2+x_2^2-1$ and $Q = x_1x_2-1$ and the sample $s=(-\tfrac{1}{2},0)$ as depicted in \Cref{fig:scc-ex} (left). 
    The darker shaded region around the sample is a CAD cell bounded by the roots of $P$ in the $x_2$-axis. The bounds on the $x_1$-axis are defined by the projection: using regular delineability means $x_1$ is bounded by the discriminant of $P$ from below and the leading coefficient of $Q$ from above. However, this upper bound on $x_1$ is too tight, since the singularity of $Q$ does not affect the sign-invariance property of the cell. We could thus extend the single cell to include also the lighter shaded region.
    
    In this paper, we formalize what ``\emph{delineability without leading coefficient}'' means. 
    To do so, we view the roots of a polynomial in real projective space and discover that these roots behave nicely, i.e. we can describe the variety of a polynomial using continuous functions (see \Cref{fig:scc-ex}, right). 
    Projective delineability of $Q$ and the order-invariance of the resultant of the two polynomials is sufficient to show that no real root of $Q$ can enter or pop-up within the larger cell. We can thus safely leave out the leading coefficient of $Q$ in this example.

\begin{figure}[h]
    \centering
    \begin{tikzpicture}[scale=0.49]
        \begin{myaxis}
            \fill[blue25] (0,0) circle (1);
            \begin{scope}
                \clip (-1,-1) rectangle (-.05,1);
                \fill[blue50] (0,0) circle (1);
            \end{scope} 
            
            \draw[very thick, color=green100,domain=-2.5:-0.2, variable=\x] plot ({\x},{1/\x});
            \draw[very thick, color=green100,domain=0.2:2.5, variable=\x] plot ({\x},{1/\x});
            \draw[very thick, color=red100] (0,0) circle (1);

            \draw[blue100,fill] (-.5,0) circle (0.05);
        \end{myaxis}

        \node at (12,2) {\includegraphics[width=15em,trim={1cm 3cm 4cm 3cm},clip]{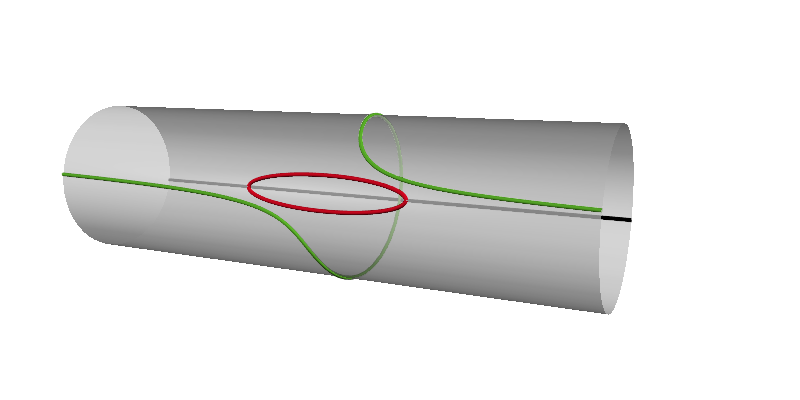}};
    \end{tikzpicture}

    \vspace{-1em}
    \caption{The sets of roots of  $P$ (red) and $Q$ (green) seen in $\R^2$ (left) and in $\R \times \RP$ (right)}
    \label{fig:scc-ex}
\end{figure}
\end{example}

\section{Preliminaries}\label{Sec:basics}

\subsection{The projective line}\label{sec:projline}

The points of the \emph{real projective line $\RP$} are the lines of $\R^2$ passing through $(0,0)$. Such a line is determined by any of its nonzero vectors $(x,y)\in\Rstar=\R^2\setminus\{(0,0)\}$, and so $\RP$ is equivalently defined as the quotient set of $\Rstar$ by the equivalence relation given by proportionality. We denote the quotient map by $\pi:\Rstar \to \RP$ and the equivalence class of $(x, y)$ by $\pi(x,y)$ or $(x:y)$.

The topology of $\RP$ is the usual quotient topology: $U \subset \RP$ is open if $\pi^{-1}(U)$ is open in $\Rstar$. The map $\psi(x:y)=\frac{y}{x^2+y^2}(x,y)$ is a homeomorphism from $\RP$ to a circle.

The real line $\R$ is usually identified with an open and dense subset of $\RP$  by first identifying $\R$ to a (parameterized) line of $\Rstar$ and then composing with the projection: 
\begin{equation}\label{eqn:iota}
    \iota : \R \to \R^2 : x \mapsto (x,1),\quad \varphi: \mathbb{R}\to \RP : x\mapsto (x:1).
\end{equation}
The only point in $\RP$ that is not in the range of the continuous injective map $\varphi$ is the point $(1:0)$.  For this reason it is sometimes denoted by $\infty$. 
The inverse map 
\[ \varphi^{-1}: \RP\setminus\{\infty\}\to\mathbb{R}\ : (x:y)\mapsto \frac{x}{y}\]
is also continuous.
The choice to identify $\R$ with a given line in $\R^2$ (not containing $(0,0)$) is arbitrary: any other choice of parameterized line reads $\iota_A(\R)$ where $A=(a_{ij})_{i,j\in\{1,2\}}$ is a nonsingular matrix ($A\in GL(2,\R)$) and 
\[
\iota_A : \mathbb{R}\to\mathbb{R}^2:x\to A\begin{pmatrix}x\\1\end{pmatrix}.
\]
Setting $\varphi_A = \pi \circ \iota_A$, the point at infinity (with respect to $A$) is the point that is not in the range of $\varphi_A$, and thus corresponds to the first column of $A$. The maps $(\varphi_A)^{-1}$, for $A \in GL(2, \R),$ endow $\RP$ with the structure of an analytic manifold.  
Note that we have $\varphi_A=\mu_A\circ\varphi$, where for any $A \in GL(2,\R)$, the map $\mu_A$ is defined by
\[
\mu_A : \RP \to \RP : (x:y) \mapsto \pi \left( A\begin{pmatrix}x\\y\end{pmatrix} \right).
\]

\subsection{Binary forms, homogenizations and pull-backs}

Later we deal with roots of polynomials at $\infty$. The framework of binary forms allows us to deal with these roots at infinity on an equal footing with the real roots.  
\begin{definition}\label{def:binaryforms}
A \emph{binary form} of degree $d$ is a  function $g : \R^2 \to \R$ expressed as 
\begin{equation}\label{eq:g}g(x,y) = \sum_{k=0}^d c_kx^ky^{d-k},\quad c_0,\ldots,c_d\in\mathbb{R}.\end{equation}
The set of binary forms of degree $d$ is denoted by $\mathbb{R}[x,y]_d$.
\end{definition}
In view of \Cref{def:binaryforms}, the space $\mathbb{R}[x,y]_d$ is clearly isomorphic to the vector space $\mathbb{R}[x]_{\leqslant d}$ of univariate polynomial functions of degree less than or equal to $d$. An isomorphism is given by the pull-back and its inverse, the homogenization with respect to degree $d$. 
\begin{definition}\label{def:pullback} The \emph{pull back of binary forms} of degree $d\in\N$ is the map
$\iota^{*d}:\mathbb{R}[x,y]_d\to\mathbb{R}[x]_{\leqslant d}:g\mapsto g\circ \iota$.
The \emph{homogenization} with respect to degree $d$ is $H^d=(\iota^{*d})^{-1}$.
\end{definition}

\begin{remark}\label{rem:restrUni} \mbox{}
\begin{enumerate}
\item If $g$ is a binary form of degree $d$ given by \eqref{eq:g}, then $\iota^{*d}g(x)=\sum_{k=0}^d c_kx^k$. It has degree less than or equal to $p < d$ if and only if $c_d=\cdots=c_{p+1}=0$.
\item We have $H^d(P)(x,y)=y^dP(\frac{x}{y})$ for every $P\in \R[x]_{\leqslant d}$ and $(x,y)\in\mathbb{R}^2$ such that $y\neq 0$.
\item By definition, pull-back and homogenization with respect to a same fixed degree are the inverse of each other. This fails to be true if they are not performed with respect to the same fixed degree. For instance, if $g(x,y)=x^2y+xy^2+y^3$, then $\iota^{*3}g(x)=x^2+x+1$, and we have  $H^2(\iota^{*3}g)(x,y)\neq g(x,y)$.
\end{enumerate}
\end{remark}

As we continue it will be useful to consider the pull-back and homogenization with respect to other lines in $\R^2$. 
\begin{definition}\label{def:pullbackgen}
For every $A\in GL(2,\mathbb{R})$, the pull-back with respect to $A$ is $\iota_A^{*d}:\mathbb{R}[x,y]_d\to\mathbb{R}[x]_{\leqslant d}:g\mapsto g\circ \iota_A$ and the corresponding homogenization is $H_A^d=(\iota_A^{*d})^{-1}$.
Finally, we set $A^{*d} : \mathbb{R}[x]_{\leqslant d} \to \mathbb{R}[x]_{\leqslant d} : P \mapsto \iota_A^{*d}(H^d(P))$.
\end{definition}

\begin{remark}
If $P(x)=\sum_{k=0}^dc_kx^k$, we compute
\[
A^{*d}P(x)=\sum_{k=0}^dc_k(a_{11}x+a_{12})^k(a_{21}x+a_{22})^{d-k}
\]
for $A\in GL(2,\R)$. When $a_{21}x+a_{22} \neq 0$, this expression also reads $(a_{21}x+a_{22})^dP\left(\frac{a_{11}x+a_{12}}{a_{21}x+a_{22}}\right)$.
\end{remark}

The next lemma gathers useful formulas to deal efficiently with these operators.
We denote by $R_A$ the right composition with $A$ (seen as an operator from $\R^2$ to $\R^2$), namely $R_A:\mathbb{R}[x,y]_d\to \mathbb{R}[x,y]_d:g\mapsto g\circ A.$

\begin{lemma}\label{lemma:pullback}
For every $d\in\mathbb{N}$ and $A,B\in GL(2,\R)$, we have $\iota_A^{*d}\circ H^d_B=\iota^{*d}\circ R_{B^{-1}A}\circ H^d$. In particular, we have $\iota_A^{*d}=\iota^{*d}\circ R_A$, and $H^d_A=R_{A^{-1}}\circ H^d.$
\end{lemma}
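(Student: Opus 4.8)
The plan is to reduce the whole statement to two elementary observations about how the maps interact with composition. The first is that the affine embedding with respect to $A$ factors through the standard one: viewing $A\in GL(2,\R)$ as a linear endomorphism of $\R^2$, we have $\iota_A = A\circ\iota$, since $A\iota(x)=A\begin{pmatrix}x\\1\end{pmatrix}=\iota_A(x)$ for every $x\in\R$. The second is that right composition is \emph{contravariant}, i.e. $R_A\circ R_B = R_{BA}$ on $\R[x,y]_d$, because $(g\circ B)\circ A = g\circ(BA)$. I would also record two sanity checks: that $R_A$ genuinely maps $\R[x,y]_d$ into itself (precomposing a homogeneous form of degree $d$ with a linear map preserves homogeneity and degree), and that, restricting to $A\in GL(2,\R)$, the identity $R_A\circ R_{A^{-1}} = R_I = \id$ shows $R_A$ is invertible with $(R_A)^{-1}=R_{A^{-1}}$.

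From the first observation the second displayed assertion is immediate: for $g\in\R[x,y]_d$,
\[
\iota_A^{*d}(g)=g\circ\iota_A = g\circ A\circ\iota = (R_A g)\circ\iota = \iota^{*d}(R_A g),
\]
so $\iota_A^{*d}=\iota^{*d}\circ R_A$. Inverting this (a composite of bijections, since $\iota^{*d}$ is the isomorphism of \Cref{def:pullback} and $R_A$ is invertible as above) yields the third assertion:
\[
H^d_A=(\iota_A^{*d})^{-1}=(R_A)^{-1}\circ(\iota^{*d})^{-1}=R_{A^{-1}}\circ H^d .
\]

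Finally I would obtain the main identity by composing these two special cases and invoking contravariance:
\[
\iota_A^{*d}\circ H^d_B=(\iota^{*d}\circ R_A)\circ(R_{B^{-1}}\circ H^d)=\iota^{*d}\circ(R_A\circ R_{B^{-1}})\circ H^d=\iota^{*d}\circ R_{B^{-1}A}\circ H^d .
\]
To justify that the two displayed equalities really are ``in particular'' cases, I would read the main identity backwards: setting $B=I$ gives $\iota_A^{*d}\circ H^d=\iota^{*d}\circ R_A\circ H^d$, and cancelling the bijection $H^d$ on the right returns $\iota_A^{*d}=\iota^{*d}\circ R_A$; setting $A=I$ gives $\iota^{*d}\circ H^d_B=\iota^{*d}\circ R_{B^{-1}}\circ H^d$, and cancelling $\iota^{*d}$ on the left returns $H^d_B=R_{B^{-1}}\circ H^d$.

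There is no genuine obstacle here beyond bookkeeping; the proof is a short chain of compositions. The only points that require a moment's care are the order reversal in the contravariance formula (it produces $R_{BA}$, not $R_{AB}$, and correspondingly $R_{B^{-1}A}$ in the final line), and the verification that $R_A$ preserves the degree, so that the inverses $(\iota_A^{*d})^{-1}$ and $(R_A)^{-1}$ are meaningful on $\R[x]_{\le d}$ and $\R[x,y]_d$ respectively.
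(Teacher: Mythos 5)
Your proof is correct and follows essentially the same route as the paper's: establish $\iota_A^{*d}=\iota^{*d}\circ R_A$ directly from the definition via $\iota_A=A\circ\iota$, obtain $H^d_A=R_{A^{-1}}\circ H^d$ by inversion, and compose the two (using $R_A\circ R_{B^{-1}}=R_{B^{-1}A}$) to get the general identity. You simply spell out the bookkeeping that the paper leaves implicit.
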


\begin{proof}
It is sufficient to prove the particular cases. The first
one follows from the very definition of $\iota_A^{*d}$, while the second is obtained by inversion.
\end{proof}

\subsection{Projective roots, resultants and discriminants}\label{sec:projroots}

\noindent Projective\footnote{Here and throughout this work, by projective, we mean in $\RP$.} roots are defined for binary forms and then, via homogenization, for univariate polynomials. 
\begin{definition}
A point $(x:y)\in\RP$ is a \emph{projective root} of $g\in\R[x,y]_d$ if $g(x,y)=0$. It is a projective root of $P\in\R[x]_{\leqslant d}$, (w.r.t. $d$) if it is a projective root of $H^d(P)$.
\end{definition}
Note that this definition is valid since, by homogeneity, when $g$ vanishes at a point $(x,y) \in \Rstar$, it vanishes on the whole line $(x:y)$.
We can define the multiplicity of the roots of a binary form using the following result\footnote{See also Lemma 6, $\S$7, of \cite[Chapter 8]{Cox}.}. 
\begin{lemma}\label{lemma:factorization}
All nonzero $g \in \R[x,y]_d$ factor uniquely\footnote{Up to permutation.} as 
{\small\begin{equation}\label{eq:factorization1}
g(x,y) = y^{d-\deg(\iota^{*d} g)} (x-x_1y)^{m_1} \ldots (x-x_ty)^{m_t} Q(x,y),
\end{equation}} 
where $x_1,\ldots,x_t$ are the distinct roots of $\iota^{*d} g$ with multiplicities $m_1,\ldots,m_t$ and $Q$ is a binary form without projective roots.
\end{lemma}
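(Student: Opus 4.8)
The plan is to transport the problem to univariate real polynomials through the pull-back $\iota^{*d}$ and its inverse $H^d$, where the assertion reduces to the ordinary factorization of a real polynomial, and then to homogenize back while carefully bookkeeping the powers of $y$. Throughout I write $p=\deg(\iota^{*d}g)$, so that by \Cref{rem:restrUni} the factor $y^{d-p}$ records the gap between the nominal degree $d$ of the form and the degree of its dehomogenization.

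For existence, I would first factor the univariate polynomial $\iota^{*d}g$ over $\R$ as $\iota^{*d}g(x)=(x-x_1)^{m_1}\cdots(x-x_t)^{m_t}\,s(x)$, where $x_1,\ldots,x_t$ are its distinct real roots with multiplicities $m_1,\ldots,m_t$ and $s$ collects the leading coefficient together with the product of the irreducible quadratic factors, so that $s$ has no real root and $\deg s=p-\sum_l m_l$. Applying $H^d$ and using $H^d(P)(x,y)=y^dP(\tfrac{x}{y})$ from \Cref{rem:restrUni} for $y\neq 0$, each factor $(\tfrac{x}{y}-x_l)^{m_l}$ contributes $y^{-m_l}(x-x_ly)^{m_l}$ and $s(\tfrac{x}{y})$ contributes $y^{-\deg s}Q(x,y)$ with $Q:=H^{\deg s}(s)$. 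Collecting the powers of $y$ (here $\sum_l m_l+\deg s=p$) gives
\[
g(x,y)=y^{d-p}\,(x-x_1y)^{m_1}\cdots(x-x_ty)^{m_t}\,Q(x,y),
\]
first on $\{y\neq 0\}$ and then on all of $\R^2$ since both sides are binary forms of degree $d$ agreeing on a dense set. Finally $Q$ has no projective root: a root $(x_0:y_0)$ with $y_0\neq 0$ would force $s(x_0/y_0)=0$, which is impossible, while $(1:0)$ is excluded because $Q(1,0)$ equals the nonzero leading coefficient of $s$.

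For uniqueness, given any factorization of the stated shape I would set $y=1$ to recover $\iota^{*d}g(x)=\prod_l(x-x_l)^{m_l}\,Q(x,1)$. Since $Q$ has no projective root, $Q(x,1)$ has no real root, so the $x_l$ are forced to be exactly the distinct real roots of $\iota^{*d}g$ and the $m_l$ their multiplicities; this pins down $\{(x_l,m_l)\}$ up to permutation and also determines $Q(x,1)$. The absence of the projective root $(1:0)$ forces $Q(1,0)\neq 0$, i.e. the top-degree coefficient of $Q$ in $x$ is nonzero, so the degree of $Q$ as a binary form equals $\deg(Q(x,1))$; this fixes the exponent $d-p$ of $y$, and then $Q=H^{\deg(Q(x,1))}(Q(x,1))$ is uniquely recovered.

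The only real care needed, and hence the main obstacle, is the bookkeeping of the powers of $y$ together with the treatment of the point at infinity. The identity $H^d(P)(x,y)=y^dP(\tfrac{x}{y})$ is valid only for $y\neq 0$, so the factorization must be established on $\{y\neq 0\}$ and then extended to all of $\R^2$ by the polynomial identity argument. The exponent $d-\deg(\iota^{*d}g)$ is precisely the multiplicity of the root at infinity $(1:0)$, and the observation $Q(1,0)\neq 0$ is exactly the condition guaranteeing that no further root at infinity is concealed inside $Q$.
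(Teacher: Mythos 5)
Your proposal is correct and follows essentially the same route as the paper's proof: factor the dehomogenization $\iota^{*d}g$ over $\R$, homogenize back using $H^d(P)(x,y)=y^dP(\tfrac{x}{y})$ on $\{y\neq 0\}$, extend to all of $\R^2$ by density, and derive uniqueness from the uniqueness of the univariate factorization. You simply spell out the $y$-power bookkeeping and the verification that $Q$ has no projective root (including at $(1:0)$) in more detail than the paper does.
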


\begin{proof}
If $g \in \R[x,y]_d$ is not zero, then $\iota^{*d} g \in \R[x]_{\leqslant d}$ factors as follows, where $x_1,\ldots,x_t\in\R$ are the distinct roots of $\iota^{*d}g$ and $R$ has no real root:
\begin{equation}\label{eq:ioastarg}
(\iota^{*d} g)(x) = (x-x_1)^{m_1} \ldots (x-x_t)^{m_t} R(x).
\end{equation}
We obtain the decomposition (with $Q= H^{\deg(R)}(R)$) and the properties of the multiplicities by computing $g(x,y)=H^d(\iota^{*d} g)(x,y)$ for $y \neq 0$ (using Remark \ref{rem:restrUni}) and extending the equality to $\R^2$ by continuity. The uniqueness directly follows from the uniqueness of the factorization of $\iota^{*d}g$.
\end{proof}
To sum up, the projective roots of a binary form $g$ are on the one hand the roots of $\iota^{*d}g$, seen as points in $\RP$ through the map $\varphi$, with the same multiplicities (these are called real roots); and on the other hand $\infty=(1:0)$ (the root at infinity) with multiplicity $d-\deg(\iota^{*d}g)$ (when this number is positive). The projective roots with respect to degree $d$ of a univariate polynomial $P\in\R[x]_{\leqslant d}$ are its real roots, seen as elements of $\RP$ via $\varphi$ and the root at infinity with multiplicity $d-\deg(P)$. In particular, $P$ always has a projective root at infinity when $d > \deg(P)$.

\begin{lemma}\label{lemma:linkMult}
If $g \in \R[x,y]_d$ and $A\in GL(2,\R)$, then $(x:y)$ is a root of multiplicity $m$ of $R_A g$ if and only if $\mu_A(x:y)$ is a root of multiplicity $m$ of $g$. The same result holds for the projective roots with respect to degree $d$ of the polynomials $A^{*d}P$ and $P\in\R[x]_{\leqslant d}$, respectively.
\end{lemma}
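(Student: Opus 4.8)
The plan is to exploit that $R_A$ is a degree-preserving algebra automorphism of the binary forms and to push the canonical factorization of \Cref{lemma:factorization} through it, tracking multiplicities by uniqueness. First I would record the elementary properties of $R_A$. Since composition with the fixed linear map $A$ is multiplicative, $R_A(fg) = R_A(f)R_A(g)$, and since $A$ is linear, $R_A$ preserves the degree of a binary form; moreover $R_A\circ R_B = R_{BA}$ exhibits $R_{A^{-1}}$ as a two-sided inverse, so $R_A$ is an automorphism sending nonzero linear forms bijectively to nonzero linear forms. Dually, $\mu_A\circ\mu_B = \mu_{AB}$ makes $\mu_A$ a bijection of $\RP$ with inverse $\mu_{A^{-1}}$. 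The key link is that if $\ell$ is a nonzero linear form vanishing exactly at the projective point $p$, then $R_A\ell = \ell\circ A$ vanishes at $(x:y)$ iff $\mu_A(x:y)=p$, i.e.\ $R_A\ell$ vanishes exactly at $\mu_A^{-1}(p)$. Finally, if $Q$ has no projective root, then $Q\neq 0$ on $\Rstar$, and since $A$ preserves $\Rstar$, the form $R_A Q$ has no projective root either.

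Next I would apply $R_A$ to the factorization of $g$. Writing \Cref{lemma:factorization} as a product $\ell_1^{\mu_1}\cdots\ell_r^{\mu_r}\,Q$ of powers of linear forms $\ell_j$ (the factors $y$ and $x-x_iy$) indexed by the distinct projective roots $p_j$ with multiplicities $\mu_j$, times a root-free factor $Q$, multiplicativity gives
\[
R_A g = (R_A\ell_1)^{\mu_1}\cdots(R_A\ell_r)^{\mu_r}\,(R_A Q).
\]
Each $R_A\ell_j$ is a nonzero linear form vanishing exactly at $\mu_A^{-1}(p_j)$, these points are distinct because $\mu_A$ is a bijection, and $R_A Q$ is root-free. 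After normalizing each $R_A\ell_j$ to the shape of \Cref{lemma:factorization} and absorbing the scalars into the root-free factor, this is precisely the canonical factorization of $R_A g$; by its uniqueness the multiplicity of $R_A g$ at $\mu_A^{-1}(p_j)$ equals $\mu_j$. Equivalently, $(x:y)$ is a root of multiplicity $m$ of $R_A g$ iff $\mu_A(x:y)$ is a root of multiplicity $m$ of $g$, which is the first claim.

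For the polynomial statement I would reduce to the binary-form case via homogenization. By \Cref{lemma:pullback}, $A^{*d}P = \iota_A^{*d}(H^d P) = \iota^{*d}(R_A(H^d P))$, and since $H^d$ and $\iota^{*d}$ are mutually inverse on $\R[x,y]_d$ while $R_A$ preserves degree $d$, applying $H^d$ yields $H^d(A^{*d}P) = R_A(H^d P)$. As the projective roots with respect to $d$ of $A^{*d}P$ and of $P$ are by definition those of $H^d(A^{*d}P)$ and of $H^d P$, the first part applied to $g = H^d P$ gives the result.

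I expect the main obstacle to be the bookkeeping in the second paragraph: applying $R_A$ does not return factors in the normalized shape $y$ and $x-x_iy$, so the multiplicities cannot be read off directly, and one must argue that the displayed product is the canonical factorization up to scalars and reordering before invoking its uniqueness. The uniform treatment of the root at infinity (the factor $y$) alongside the affine roots is what makes this argument clean, and it is precisely why the binary-form viewpoint is the right setting.
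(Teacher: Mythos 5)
Your proposal is correct and follows essentially the same route as the paper's proof: both push the canonical factorization of \Cref{lemma:factorization} through $R_A$, match the linear factors $R_A\ell_i$ with the roots $\mu_A^{-1}(p_i)$, and reduce the polynomial case to the binary-form case via the identity $H^d(A^{*d}P)=R_A(H^d(P))$ from \Cref{lemma:pullback}. Your treatment is slightly more explicit than the paper's about normalizing the transformed linear factors and invoking uniqueness of the factorization, but the argument is the same.
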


\begin{proof}
In view of Lemma \ref{lemma:factorization}, the binary form $g$ factors as
\[
g(x,y) =  \prod_{i=0}^tl_i(x,y)^{m_i}Q(x,y) 
\]
and the projective roots of $g$ are the roots of the linearly independent linear forms $l_i$, with multiplicities $m_i$ for $i \in \{0, \ldots, t\}$, while $Q$ has no projective root. We then deduce the factorization of $R_Ag$:
\begin{align*}
R_Ag(x,y) &= \prod_{i=0}^t (R_Al_i(x,y))^{m_i} R_AQ(x,y).
\end{align*}
If $(x_0:y_0)$ is a projective root of $R_Ag$, then $\mu_A(x_0:y_0)$ is a projective root of one, and only one, linear form $l_i$. Hence it is a projective root of $g$ with multiplicity $m_i$. The projective roots of $A^{*d}P$ with respect to $d$ are the projective roots of $H^d(A^{*d}P)=H^d\circ\iota_A^{*d}\circ H^d(P)=R_A(H^d(P))$ by Lemma \ref{lemma:pullback} and the result follows.
\end{proof} 
To emphasize that resultants and discriminants are computed with respect to fixed degree we use the notations $\Res^{p,q}(P,Q)$ for the resultant with respect to degrees $(p,q)$ of $(P,Q) \in \R[x]_{\leqslant p}\times\R[x]_{\leqslant q}$.  
We denote by $\Disc^p(P)$ the \emph{discriminant} of $P$ with respect to degree $p$. We recall the following result (see \cite[Chapter 12]{Gelfand}).

\begin{proposition}\label{prop:GKZ}
 For every $p,q\in\mathbb{N}$, $P \in \mathbb{R}[x]_{\leqslant p}$, $Q \in \mathbb{R}[x]_{\leqslant q}$ and $A\in GL(2,\R)$, we have 
 \begin{align*}
     \Res^{p,q}(A^{*p}P,A^{*q}Q) &= \det(A)^{pq} \Res^{p,q}(P,Q),\\
     \mathrm{Disc}^{p}(A^{*p}P) &= \det(A)^{p(p-1)}\mathrm{Disc}^{p}(P).
 \end{align*}
\end{proposition}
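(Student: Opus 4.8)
The plan is to transfer both identities from the univariate, fixed-degree setting to the associated binary forms, where the $GL(2,\R)$-action is transparent, and then to read off the scaling factor from a product-over-roots expression. First I would observe that $\Res^{p,q}(P,Q)$ and $\Disc^p(P)$ are, by definition, determinants of the Sylvester-type matrices built from the coefficients of $P,Q$ regarded as having degrees $p,q$. Since homogenization only reinterprets these same coefficients, these quantities coincide with the resultant and discriminant of the binary forms $H^p(P)$ and $H^q(Q)$; in particular they correctly account for the roots at infinity recorded in \Cref{lemma:factorization}. Combined with the identity $H^d(A^{*d}P)=R_A(H^d(P))$ established inside the proof of \Cref{lemma:linkMult} (a consequence of \Cref{lemma:pullback}), the whole statement reduces to the purely homogeneous assertions $\Res(R_AF,R_AG)=\det(A)^{pq}\Res(F,G)$ and $\Disc(R_AF)=\det(A)^{p(p-1)}\Disc(F)$ for binary forms $F,G$ of degrees $p,q$.

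Next I would factor $F$ and $G$ completely over $\C$ into linear forms, say $F=\prod_{i=1}^p(\lambda_ix-\kappa_iy)$ and $G=\prod_{j=1}^q(\mu_jx-\nu_jy)$. This is possible because every nonzero degree-$p$ binary form splits into exactly $p$ complex linear factors — the complex analogue of \Cref{lemma:factorization}, with factors $y$ accounting for roots at infinity — so that no separate leading constant survives. One then has the classical Poisson-type product formulas $\Res(F,G)=\prod_{i,j}(\lambda_i\nu_j-\kappa_i\mu_j)$ and $\Disc(F)=\prod_{i<j}(\lambda_i\kappa_j-\lambda_j\kappa_i)^2$ (see \cite[Chapter 12]{Gelfand}). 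Composing a linear form with $A$ transforms its root-vector $(\lambda,\kappa)$ by a fixed linear map depending only on $A$, and the crux is the one-line determinant computation showing that the alternating bracket of two root-vectors scales exactly by $\det(A)$: substituting $A$ sends $\lambda_i\nu_j-\kappa_i\mu_j$ to $\det(A)\,(\lambda_i\nu_j-\kappa_i\mu_j)$, and likewise $\lambda_i\kappa_j-\lambda_j\kappa_i$ to $\det(A)\,(\lambda_i\kappa_j-\lambda_j\kappa_i)$. It then remains only to count brackets: the resultant is a product of $pq$ such brackets, giving $\det(A)^{pq}$, while the discriminant is a product of $\binom{p}{2}$ brackets each appearing squared, giving $\det(A)^{2\binom{p}{2}}=\det(A)^{p(p-1)}$.

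I expect the main obstacle to be bookkeeping rather than the algebra: one must carefully justify that the fixed-degree Sylvester determinants really do equal the binary-form resultant and discriminant, and that the product-over-roots formulas hold with the correct normalization when the true degree of $P$ or $Q$ drops below $p$ or $q$ (i.e.\ in the presence of roots at infinity). Once this normalization is pinned down — so that there is genuinely no stray leading-coefficient factor carrying its own transformation behaviour — the single-bracket determinant identity is immediate and the remainder is pure counting. An alternative, essentially equivalent, route would be to feed the root correspondence of \Cref{lemma:linkMult} directly into the product formulas, but I would favour the binary-form reduction above since it keeps the $\det(A)$ weight isolated in a single elementary computation.
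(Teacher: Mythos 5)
The paper does not actually prove this proposition: it is stated as a recalled fact with a citation to \cite[Chapter 12]{Gelfand}, so there is no internal proof to compare against. Your argument is correct and is essentially the standard one from that reference: reduce to binary forms via $H^{d}(A^{*d}P)=R_A(H^{d}(P))$ (which does follow from \Cref{lemma:pullback} exactly as you say), split the forms into $p$ resp.\ $q$ complex linear factors with the $y$-factors absorbing any drop in degree, and observe that $R_A$ acts on each root-vector by a fixed matrix of determinant $\det(A)$, so each $2\times 2$ bracket picks up one factor of $\det(A)$; counting $pq$ brackets for the resultant and $2\binom{p}{2}=p(p-1)$ for the discriminant gives the stated exponents. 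The only point genuinely needing care is the one you identify — that the fixed-degree Sylvester determinants agree with the product-over-roots formulas with the leading constants absorbed into the linear factors (so no stray leading-coefficient factor transforms separately) — and this checks out; any residual sign or normalization convention is a constant independent of the coefficients and therefore cancels from both sides of the covariance identities. The argument is sound.
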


\subsection{Extensions to multivariate polynomials}

CAD projection is calculated with respect to the last variable $x_n$. Hence we now extend the previous constructions to $n$-variate polynomials by acting on the last variable only. 
We denote the extended operators by the same symbol, and use bold $\textbf{x}$ to denote a $(n-1)$-tuple.

\begin{definition}
  If $f$ is a function in more than $n-1$ variables, for every ${\bf{x}_0}\in\R^{n-1}$, we define the \emph{evaluation} $E_{\bf{x}_0}(f)$ to be the function obtained from $f$ by assigning the value $\textbf{x}_0$ to the $(n-1)$-tuple of first variables. 
\end{definition}

\begin{definition} \mbox{}
\begin{enumerate}
  \item We denote by $\R[\textbf{x};x_n,y]_d$ the set of polynomials of degree $d$ in $x_n,y$, i.e. those $g\in\R[x_1,\ldots,x_n,y]$ such that $E_{\textbf{x}_0}(g)\in\R[x_n,y]_{d}$, for all $\textbf{x}_0\in \R^{n-1}$. 
  \item We denote by $\R[\textbf{x};x_n]_{\leqslant d}$ the set of polynomials of degree at most $d$ in $x_n$, i.e. those $P\in\R[x_1,\ldots,x_n]$ such that $E_{\textbf{x}_0}(P)\in\R[x_n]_{\leqslant d}$, for all $\textbf{x}_0\in \R^{n-1}$. The degree in $x_n$ of $P$ is defined by $\deg_{x_n}(P) = \min\left\{d \in \N \; | \; P \in \R[\textbf{x};x_n]_{\leqslant d}\right\}.$
\end{enumerate}
    
Using again the evaluation operators, we now define the extension of the operators defined so far. These extensions are all defined by first evaluating an $n$-variate or $(n+1)$-variate polynomial at a point $\textbf{x}_0$ of $\R^{n-1}$ to obtain a univariate or bivariate polynomial, and then using the corresponding operator defined in Section \ref{sec:projroots}.

\end{definition}

\begin{definition}
For $d \in \N, A\in GL(2,\R)$ and $T\in \{H_A^d$, $\iota_A^{*d}$, $R_A, A^{*d}$, $\Disc^p\}$ from Section \ref{sec:projroots}, we define the action of the extended operator $T$ on $f$ in $\R[\textbf{x};x_n]_{\leqslant d}$ or $\R[\textbf{x};x_n,y]_d$ by defining the evaluations of $T(f)$ at every point $\textbf{x}_0\in\R^{n-1}$, setting $E_{\textbf{x}_0}\circ T=T\circ E_{\textbf{x}_0}$. For $p,q\in\N$, we extend similarly $\Res^{p,q}$ on $\R[\textbf{x};x_n]_{\leqslant p} \times \R[\textbf{x};x_n]_{\leqslant q}$.
\end{definition}

In the above definition, we described formally the usual pointwise extensions of operators. For example, writing $P({\bf{x}}, x_n) = \sum_{k=0}^{d_n} c_k({\bf{x}}) x_n^k$, we have
{\small{\begin{align*}
H^{d_n}(P)(\textbf{x}&, (x_n,y)) {=} E_{\textbf{x}}(H^{d_n}(P))(x_n,y)  
                                {=}  \sum_{k=0}^{d_n} c_k({\textbf{x}}) x_n^ky^{d-k},
\end{align*}}}
\vspace{-0.2cm}
{\small
\begin{equation}
       \!\! A^{*d_n}P(\textbf{x}, x_n)= \sum_{k=0}^{d_n} c_k(\textbf{x}) (a_{11}x_n+a_{12})^k(a_{21} x_n + a_{22})^{d_n-k} \label{eq:A*P}.
\end{equation}}
\!\!
Similarly, we extend the definitions of $\varphi_A, \mu_A, \iota_A$, keeping again the same symbol and acting on the last variable only. 

\begin{definition}
For $A\in GL(2,\R)$ and $F\in \{\varphi_A, \mu_A, \iota_A\}$ defined on the set $X$ as in Section \ref{sec:projline}, we define the extended function $F$ on $\R^{n-1} \times X$ by setting $F(\textbf{x}, u) = (\textbf{x}, F(u))$, i.e. the extended function is given by $\id \times F$ where $\id$ is the identity map of $\R^{n-1}$.
\end{definition}

Pointwise properties established in the previous sections readily generalize to the extended operators and functions defined above. However, it is important to fix a priori the degree with respect to which homogenizations, resultants and discriminants are computed. 

\begin{remark} One could also adopt another equivalent point of view to define these extensions to multivariate polynomials: first, define the operators of Section \ref{sec:projroots} in general for polynomials with coefficients in an integral domain and then, study whether or not these operators commute with ring homomorphisms (\cite[Section 4.2.1]{basu2007} for the resultant) and in particular with the evaluation $E_{\textbf{x}}$.
\end{remark}

We will analyze the behaviour of the order of polynomials with respect to homogenization and composition with a linear transformation. Note that the second part of \Cref{prop:order} justifies the definition of order of a binary form at a point of the projective line. 

\begin{proposition}\label{prop:order}\mbox{}
  \begin{enumerate}
    \item Let $f \in \R[x_1, \ldots, x_m]$ and $A \in GL(m,\R)$. We have 
    $\ord(f\circ A,v)=\ord(f,Av)$ for all $v\in \R^m$.
    \item If $g \in \R[\textbf{x};x_n,y]_d$, then we have $\ord(g,(\textbf{x},(x_n,y)))=\ord(g,(\textbf{x},k(x_n,y)))$ for every $\textbf{x} \in \R^{n-1}, (x_n,y)\in \R^2_*$ and $k\neq 0$.
    \item If $P \in \R[\textbf{x};x_n]_{\leqslant d}$. We have $\ord(P,(\textbf{x},x_n))=\ord(H^{d}(P),(\textbf{x},(x_n,1)))$ for every $\textbf{x}\in\R^{n-1}$, $x_n \in \R$.
  \end{enumerate}
\end{proposition}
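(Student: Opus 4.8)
The plan is to prove the three parts in order, since each rests on the previous, and throughout I read $\ord(f,v)$ as the lowest degree occurring in the Taylor expansion of $f$ centered at $v$, i.e. the degree of the lowest nonvanishing homogeneous component of the shifted polynomial $h\mapsto f(v+h)$. For part (1), I would expand $f$ at $Av$ and substitute. Writing $r=\ord(f,Av)$ and $f(Av+w)=\sum_{k\geq r}f_k(w)$ with each $f_k$ homogeneous of degree $k$ and $f_r\neq 0$, I set $w=Ah$ to obtain $(f\circ A)(v+h)=\sum_{k\geq r}f_k(Ah)$. Since $A$ is linear, each $h\mapsto f_k(Ah)=(f_k\circ A)(h)$ is again homogeneous of degree $k$; and since $A$ is invertible (so $h\mapsto Ah$ is surjective), $f_k\circ A$ vanishes identically if and only if $f_k$ does. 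Hence the expansion of $f\circ A$ at $v$ has its lowest nonvanishing component exactly in degree $r$, giving $\ord(f\circ A,v)=r=\ord(f,Av)$. The only thing to check is that composition with an invertible linear map preserves the degree-grading and annihilates nothing, which is immediate.

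For part (2), I would deduce the result from (1) by scaling. Let $A_k\in GL(n+1,\R)$ be the diagonal matrix scaling the last two coordinates $(x_n,y)$ by $k\neq 0$ and fixing $x_1,\ldots,x_{n-1}$. Since $g\in\R[\textbf{x};x_n,y]_d$ is homogeneous of degree $d$ in $(x_n,y)$ for each fixed $\textbf{x}$, we have the polynomial identity $g\circ A_k=k^d g$, whence $\ord(g\circ A_k,v)=\ord(g,v)$ because scaling by the nonzero constant $k^d$ does not change the order. On the other hand part (1) gives $\ord(g\circ A_k,v)=\ord(g,A_k v)$, and $A_k(\textbf{x},(x_n,y))=(\textbf{x},k(x_n,y))$. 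Comparing the two identities yields $\ord(g,(\textbf{x},(x_n,y)))=\ord(g,(\textbf{x},k(x_n,y)))$.

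Part (3) is the substantive one, and it is where I expect the main obstacle: the map relating $P$ to its homogenization is the nonlinear substitution $x_n\mapsto x_n/y$, so (1) does not apply directly. I would argue by comparing Taylor expansions, using $H^{d}(P)(\textbf{x},(x_n,y))=y^{d}P(\textbf{x},x_n/y)$ (valid near $y=1$, by Remark \ref{rem:restrUni}). Expanding about $(\textbf{x}_0,(x_{n,0},1))$ with displacement $(\textbf{u},s,t)$, the last argument of $P$ becomes $x_{n,0}+\sigma$ with $\sigma=(s-x_{n,0}t)/(1+t)$, a power series of order $1$ whose degree-one part is $s-x_{n,0}t$. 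Writing $P(\textbf{x}_0+\textbf{u},x_{n,0}+s)=\sum_{j\geq m}P_j(\textbf{u},s)$ with $m=\ord(P,(\textbf{x}_0,x_{n,0}))$ and $P_m\neq 0$, substitution shows each $P_j(\textbf{u},\sigma)$ has order $\geq j$, so the lowest-degree component of $H^{d}(P)$ at $(\textbf{x}_0,(x_{n,0},1))$ is $P_m(\textbf{u},s-x_{n,0}t)$; the prefactor $(1+t)^d=1+O(t)$ does not affect this leading component. The crux is then that this component is nonzero: as $(\textbf{u},s,t)\mapsto(\textbf{u},s-x_{n,0}t)$ is a surjective linear map and $P_m\neq 0$, the composite $P_m(\textbf{u},s-x_{n,0}t)$ is a nonzero homogeneous polynomial of degree $m$, so $\ord(H^{d}(P),(\textbf{x}_0,(x_{n,0},1)))=m$.

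Thus the main obstacle is precisely the last point in (3): controlling the nonlinear substitution and verifying that the leading homogeneous component does not collapse when we restrict to the hyperplane $y=1$; everything else is bookkeeping on graded Taylor expansions. As a sanity check, one easily gets the inequality $\ord(P,(\textbf{x},x_n))\geq\ord(H^{d}(P),(\textbf{x},(x_n,1)))$ from the general fact that restricting a polynomial to an affine subspace through the point cannot lower the order; the genuine content of part (3) is the reverse inequality, which is exactly what the nonvanishing of $P_m(\textbf{u},s-x_{n,0}t)$ supplies.
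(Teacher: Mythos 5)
Your proposal is correct and follows essentially the same route as the paper: part (1) via invertibility of $A$ acting on the lowest nonvanishing homogeneous component (the paper phrases this through the chain rule on iterated derivatives plus symmetry, which is equivalent), part (2) by the same diagonal-scaling trick, and part (3) by homogenizing the Taylor expansion of $P$ at the base point and observing that the lowest-degree component becomes $P_m$ precomposed with the surjective linear map $(\textbf{u},s,t)\mapsto(\textbf{u},s-x_{n,0}t)$, hence stays nonzero. The paper writes this last step as an exact polynomial identity $H^{d}(P)=\sum D_{\alpha,l}(\textbf{x}-\textbf{x}_0)^\alpha(x_n-uy)^ly^{d-l}$ and checks a specific nonvanishing derivative, but the content is the same as your leading-component argument.
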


The first assertion of \Cref{prop:order} is a particular case of a general theorem (\cite[Theorem 2.1]{McCallum1988}). We give an elementary proof of the particular case under consideration.

\begin{proof}
Using the chain rule, a simple induction shows that for $\forall k\in\N,i_1,\ldots,i_k\leqslant m$,
\[\partial_{i_1}\ldots\partial_{i_k}(f\circ A)=\sum_{j_1,\ldots,j_k=1}^mA_{j_1i_1}\cdots A_{j_ki_k}(\partial_{j_1}\ldots\partial_{j_k}f)\circ A.\]
 It follows from the definition of the order that $\ord(f\circ A,v)\geqslant \ord(f,Av)$, and the first result follows by symmetry. The second one follows by using $A=\id_{n-1}\times (k\id_2)$ and noticing that $g\circ A=k^dg$ has the same order as $g$. Finally, denote by $k$ the order of $P$ at $(\textbf{x}_0,u)$. Using the multi-index notation, we have $P(\textbf{x},x_n)=\sum_{|\alpha| + l \geqslant k}^d D_{\alpha,l} (\textbf{x}-\textbf{x}_0)^\alpha (x_n - u)^l$ and there exists $(\alpha_0,l_0)$ such that $|\alpha_0|+l_0 = k$ and $D_{\alpha_0,l_0}\neq 0$. We compute 
  \[H^{d}(P)(\textbf{x},(x_n,y)){=} \sum_{|\alpha| + l \geqslant k}^d D_{\alpha,l} (\textbf{x}-\textbf{x}_0)^\alpha (x_n - uy)^l y^{d-l}.\] 
  It is easily seen that $(\partial^\beta_{\textbf{x}} \partial^{a}_{x_n} \partial^b_{y}  H^{d}(P))$ vanishes at $(\textbf{x}_0,(u,1))$ whenever $|\beta|+a+b<k$ and that it is nonzero if $\beta=\alpha_0$, $a=l_0$ and $b=0$.
\end{proof}

\section{Projective delineability}
\label{SEC:MainDef}

In this section, we define the new concept of projective delineability of a polynomial over a subset $S\subset \R^{n-1}$, which extends the classical notion of delineability \cite{McCallum1988}  by allowing the root functions to reach infinity.

\begin{definition}
For $P\in \R[\textbf{x};x_n]$ of degree $d_n = \deg_{x_n}(P)$ in $x_n$ and $S\subset \R^{n-1}$, we define 
{\small{\begin{align*}
Z_{\R}(P,S) &= \{(\textbf{x}, x_n) \in S \times \R \; | \; P(\textbf{x}, x_n) = 0\} ,\\
Z_{\RP}(P,S) &= \{(\textbf{x}, (x_n: y)) \in S \times \RP \; |  H^{d_n}(P)(\textbf{x}, (x_n, y)) = 0\}.
\end{align*}
}}

We say that $x_n$ is a \emph{real root} of $P$ above $\textbf{x}_0\in S$ if $(\textbf{x}_0, x_n)\in Z_{\R}(P,S)$. Similarly, we say that $(x_n:y)$ is a \emph{projective root} of $P$ above $\textbf{x}_0$ if $(\textbf{x}_0, (x_n:y))\in Z_{\RP}(P,S)$. 
\end{definition}

As already mentioned, the projective roots of $P$ above $\textbf{x}_0$ consist of the real roots above $\textbf{x}_0$ (seen in the projective space) and the root at infinity above $\textbf{x}_0$. This latter root appears precisely when $\deg(E_{\textbf{x}_0}P)<d_n$ as explained in Lemma \ref{lemma:factorization} and the subsequent discussion.

\begin{definition}\label{def:projdel}
    We say that $P \in \R[\textbf{x};x_n]$ is \emph{projectively delineable} on a subset $S$ of $\R^{n-1}$ if there exist $k \in \N$ and some continuous functions $\theta_1, \ldots, \theta_k: S \to \RP$ such that 
        \begin{enumerate}
            \item $Z_{\RP}(P,S) = \text{Graph}(\theta_1) \sqcup \ldots \sqcup \text{Graph}(\theta_k)$;
            \item for all $l \in \{1,\ldots, k\},$ there exists $m_l \in \N^*$ such that for all $\textbf{x} \in S$, the multiplicity of the projective root $\theta_l(\textbf{x})$ of $E_{\textbf{x}}P$ (w.r.t. $\deg_{x_n}(P)$) is $m_l$. 
        \end{enumerate}
        We say that $\theta_1,\ldots,\theta_k$ are the \emph{projective root functions} of $P$ over $S$. 
        The graph of $\theta_l$ is a \emph{projective} $P$-\emph{section} over $S$.
\end{definition}

Requiring analyticity for $S$ and the root functions leads to the notion of analytic projective delineability, extending the notion of analytic delineability from \cite{McCallum1988}.

\begin{example}\label{ex:cub-hyp1} Polynomial $P(x_1,x_2) = (x_1x_2-1)(x_2+x_1^3)$ is projectively delineable on $\R$. Indeed, $Z_{\RP}(P,\R)$ is exactly the disjoint union of the graphs of the continuous functions $\theta_1, \theta_2 : \R \to \RP$ defined by $\theta_1(x_1) = (1 : x_1)$ and $\theta_2(x_1) = (-x_1^3 : 1)$ (respectively depicted in green and red in \Cref{fig:wrap}, top). Furthermore, $\theta_1(x_1)$ and $\theta_2(x_1)$ are both simple roots of $E_{x_1}(P)$ for all $x_1 \in \R$. 
\end{example}

In general, the delineability of $P$ on $S$ does not imply the projective delineability of $P$ on $S$, nor in the other direction. E.g. consider  the polynomials  $P(x_1,x_2) = x_1^2x_2^2 + 1$ and $Q(x_1,x_2) = x_1x_2 -1$ and $S = \R$. The relations between these two concepts are investigated in the following.
 
\begin{proposition}\label{prop:linkDelProjdel}
Let $S \subset \R^{n-1}$ and $P\in \R[\textbf{x};x_n]$ of degree $d_n$ in $x_n$ be defined by $P({\bf{x}}, x_n) = \sum_{k=0}^{d_n} c_k(\textbf{x}) x_n^k$.  
If there exists $k \in \{0,\ldots,d_n\}$ such that for all $\textbf{x} \in S$, we have
\begin{equation}\label{eqn:etoile}
      c_{d_n-k}(\textbf{x})\neq 0 \;\land\; \forall j \in \N : 0 \leq j \leq k-1, c_{d_n-j}(\textbf{x}) = 0,
\end{equation}
then $P$ is projectively delineable on $S$ if and only if $P$ is delineable on $S$.
\end{proposition}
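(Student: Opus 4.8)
The plan is to transport everything through the homeomorphism $\varphi:\R\to\RP\setminus\{\infty\}$ and its continuous inverse $\varphi^{-1}$, after first unpacking hypothesis \eqref{eqn:etoile}. Write $r$ for the integer called $k$ in \eqref{eqn:etoile} (renamed to avoid clashing with the number $k$ of root functions in Definitions \ref{def:del} and \ref{def:projdel}). Condition \eqref{eqn:etoile} says precisely that $\deg(E_{\textbf{x}}P)=d_n-r$ for every $\textbf{x}\in S$; equivalently, by Lemma \ref{lemma:factorization} and the discussion following it, the projective root $\infty=(1:0)$ of $E_{\textbf{x}}P$ has the \emph{same} multiplicity $r$ above every $\textbf{x}\in S$ (and is absent exactly when $r=0$). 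Using Remark \ref{rem:restrUni}(2), a projective root $(x_n:y)$ with $y\neq 0$ of $E_{\textbf{x}}P$ equals $\varphi(x_n/y)$, where $x_n/y$ is a real root of the same multiplicity; hence
\[
Z_{\RP}(P,S)\;=\;(\id\times\varphi)\bigl(Z_{\R}(P,S)\bigr)\ \sqcup\ \bigl(S\times\{\infty\}\bigr),
\]
the term $S\times\{\infty\}$ being present exactly when $r>0$, and the union being disjoint since $\infty\notin\varphi(\R)$.

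The implication ``delineable $\Rightarrow$ projectively delineable'' is then immediate: given real root functions $\theta_1,\dots,\theta_k:S\to\R$ with constant multiplicities $m_l$, the maps $\varphi\circ\theta_l:S\to\RP$ are continuous with values in $\RP\setminus\{\infty\}$, and when $r>0$ I adjoin the constant function $\textbf{x}\mapsto\infty$. Their graphs partition $Z_{\RP}(P,S)$ by the displayed decomposition, they are pairwise disjoint because the $\theta_l$ are and $\infty\notin\varphi(\R)$, and the multiplicities are constant (equal to $m_l$ for the finite ones, since real and projective multiplicities of a finite root agree by Lemma \ref{lemma:factorization}, and equal to $r$ for the one at infinity). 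For the converse I start from projective root functions $\theta_1,\dots,\theta_k:S\to\RP$ and must show that each is \emph{either identically $\infty$ or never equal to $\infty$}; granting this, the functions that avoid $\infty$ yield, through the continuous $\varphi^{-1}$, continuous real root functions $\varphi^{-1}\circ\theta_l$ whose graphs partition $Z_{\R}(P,S)$ (again by the decomposition) with the inherited constant multiplicities, giving delineability.

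The crux is therefore this ``no-mixing'' claim, and it is where \eqref{eqn:etoile} and the connectedness of the cell $S$ enter. For $r=0$ there is nothing to prove, since $\infty$ is never a projective root and no $\theta_l$ can attain it. For $r>0$, set $U_l=\theta_l^{-1}(\{\infty\})$. Each $U_l$ is closed (continuity of $\theta_l$, closedness of $\{\infty\}$), the $U_l$ are pairwise disjoint (two functions equal to $\infty$ at one point would violate disjointness of the graphs), and they cover $S$ (by constancy of its multiplicity, $\infty$ is a projective root above every $\textbf{x}$, hence lies on exactly one graph). I would then show each $U_l$ is also \emph{open}: if $\textbf{x}_0\in U_l$ were not interior, a sequence $\textbf{x}_i\to\textbf{x}_0$ with $\textbf{x}_i\notin U_l$ would, by the covering property and finiteness of the family, lie after passing to a subsequence in a single $U_{l'}$ with $l'\neq l$; continuity would then force $\theta_{l'}(\textbf{x}_0)=\lim\theta_{l'}(\textbf{x}_i)=\infty$, so $\textbf{x}_0\in U_l\cap U_{l'}$, contradicting disjointness. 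Thus each $U_l$ is clopen, and connectedness of $S$ gives $U_l\in\{\emptyset,S\}$, which is exactly the no-mixing claim. I expect this openness step — ruling out a projective root silently escaping to infinity only at isolated points of the cell — to be the main obstacle, and it is precisely here that the constant degree drop \eqref{eqn:etoile}, which pins the multiplicity at infinity, together with the connectedness of $S$, is indispensable (indeed, over a disconnected $S$ a single projective function could be finite on one component and infinite on another, and the equivalence can genuinely fail).
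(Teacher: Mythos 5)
Your argument is correct and matches the paper's for the forward direction and for the case $k=0$, but there is a genuine gap in the converse: Proposition~\ref{prop:linkDelProjdel} is stated for an \emph{arbitrary} subset $S \subset \R^{n-1}$, with no connectedness hypothesis, whereas your ``no-mixing'' step (each $\theta_l$ is identically $\infty$ or never $\infty$) is proved by showing the sets $U_l=\theta_l^{-1}(\{\infty\})$ are clopen and then invoking connectedness of $S$. Without connectedness the no-mixing claim is simply false --- a single projective root function can be $\infty$ on one clopen piece and finite on another --- yet the proposition still holds, so your closing remark that ``the equivalence can genuinely fail'' over disconnected $S$ is incorrect. (The paper's counterexample with $S=\{0,1\}$ defeats Corollary~\ref{cor:linkDelProjdel}(2), where only the vanishing of $c_{d_n}$ is assumed; it does not defeat the proposition, because there the multiplicity at infinity is $1$ at one point and $2$ at the other, so \eqref{eqn:etoile} fails.)

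The missing idea is the paper's relabeling trick. Your clopen partition $S=\bigsqcup_i C_i$ with $C_i=\theta_i^{-1}(\{\infty\})$ is exactly the right object, but instead of forcing each $C_i$ to be $\emptyset$ or $S$, one fixes an index $i_0$ with $C_{i_0}\neq\emptyset$ and defines new functions $\theta'_j|_{C_i}=\theta_{\sigma_i(j)}|_{C_i}$, where $\sigma_i$ is the transposition $(i\;i_0)$. The pasting lemma gives continuity, $\theta'_{i_0}$ is constantly $\infty$, and the remaining $\theta'_j$ never attain $\infty$, so they push down through $\varphi^{-1}$ to real root functions. The point that makes the swap legitimate is precisely hypothesis \eqref{eqn:etoile}: any $\theta_i$ with $C_i\neq\emptyset$ has multiplicity $k$ at some point of $S$, hence (by constancy of multiplicities in Definition~\ref{def:projdel}) multiplicity $k$ everywhere, and likewise for $\theta_{i_0}$; so exchanging them on $C_i$ preserves the constancy of multiplicities. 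Your proof as written establishes only the special case of a connected $S$; to prove the stated proposition you need this additional relabeling argument.
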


\begin{remark}\label{rem:etoile}
$\eqref{eqn:etoile}$ means that for all $\textbf{x} \in S$, $P$ admits $\infty$ as a projective root of multiplicity $k$ (w.r.t. $d_n$) above $\textbf{x}$.
\end{remark}

\begin{proof}
 If $\eqref{eqn:etoile}$ holds and $P$ is delineable on $S$, then $Z_{\R}(P,S)$ is the disjoint union of the graphs of the continuous functions $\eta_1, \ldots, \eta_p : S \to \R$, and therefore
    {\small{$$Z_{\RP}(P,S) = \begin{cases}
        \bigsqcup_{i=1}^p \text{Graph}(\varphi \circ \eta_i) &\text{if $k = 0,$}\\
        \bigsqcup_{i=1}^p \text{Graph}(\varphi \circ \eta_i) \sqcup \text{Graph} (\theta_{p+1}) &\text{if $k \neq 0,$}
    \end{cases}$$}}
 \kern-0.65em where $\theta_{p+1}$ is constantly equal to $\infty$ on $S$ and $\varphi$ is defined in \eqref{eqn:iota}. Hence $P$ is projectively delineable since the condition on multiplicities is satisfied. 
 
 If $\eqref{eqn:etoile}$ holds and if $P$ is projectively delineable on $S$, then $Z_{\RP}(P,S)$ is the disjoint union of the graphs of the continuous functions $\theta_1, \ldots, \theta_p : S \to \RP$. If $k=0$, there is no root at infinity. We thus have 
 $Z_{\R}(P,S) = \bigsqcup_{i=1}^p \text{Graph}(\varphi^{-1} \circ \, \theta_i)$. Hence $P$ is delineable since the condition on multiplicities is satisfied by Lemma \ref{lemma:factorization}. If $k>0$, we introduce new projective root functions $\theta'_1,\ldots,\theta'_p$ of $P$ on $S$ such that one of them is constantly equal to $\infty$ in the following way. Considering for all $i\in\{1,\ldots,p\}$ the closed set $C_i=\theta_i^{-1}(\{\infty\})$, we have $S=\bigsqcup_{i=1}^p C_i$.  We pick $i_0\in\{1,\ldots,p\}$ such that $C_{i_0}\neq\emptyset$ and define the new functions by $\theta'_j|_{C_i}=\theta_{\sigma_i(j)}|_{C_i} ,$
  for all $i, j \in \{1,\ldots, p\}$,
  where $\sigma_i$ is the transposition $(i \; i_0)$ if $i \neq i_0$ and the identity if $i = i_0$. It follows from the pasting lemma that these functions are continuous. The definition ensures that $\theta'_{i_0}$ is constantly equal to $\infty$ and $Z_{\RP}(P,S)=\bigsqcup_{i = 1}^p \text{Graph}(\theta'_i)$. We thus have $Z_{\R}(P,S)=\bigsqcup_{i = 1, i\neq i_0}^p \text{Graph}(\varphi^{-1}\circ\theta'_i).$
  We show that for $i \in \{1, \ldots, p\}\setminus\{i_0\}$, the multiplicity of $\theta_i'(\textbf{x})$ as a projective root of $E_{\textbf{x}}P$ (w.r.t. $d_n$) is equal to that of $\theta_i(\textbf{x})$ and hence, does not depend on $\textbf{x} \in S$. If $C_i=\emptyset$, then $\theta_i' = \theta_i$ on $S$ and the conclusion is direct.
  Otherwise, the mutliplicity of $\theta_i(\textbf{x})$ is $k$ for every $\textbf{x}\in C_i$, hence for every $\textbf{x}\in S$. The multiplicity of $\theta_i'(\textbf{x})$ is that of $\theta_i(\textbf{x})$ or that of $\theta_{i_0}(\textbf{x})$, and both are equal to $k$. By Lemma \ref{lemma:factorization}, $P$ is therefore delineable on $S$.
\end{proof}

If we only assume the sign-invariance of the leading coefficient of $P$ on $S$, then the delineability of $P$ on $S$ does not imply the projective delineability of $P$ on $S$. A counter example is given by the polynomial $P(x_1,x_2,x_3) = x_1 x_3^3 + (x_1^2 + x_2^2)x_3^2 + 1$ over the line $S \equiv x_1 = 0$.
The converse is true if we suppose that $S$ is connected.
\begin{corollary}\label{cor:linkDelProjdel}
    Using the notation of Proposition \ref{prop:linkDelProjdel}, 
    \begin{enumerate}
        \item if $c_{d_n}$ never vanishes on $S$, then $P$ is projectively delineable on $S$ if and only if $P$ is delineable on $S$;
        \item if $c_{d_n}$ vanishes identically on a connected $S$ and $P$ is projectively delineable on $S$, then $P$ is delineable on $S$.
    \end{enumerate}
\end{corollary}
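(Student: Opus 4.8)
The plan is to derive both statements directly from Proposition \ref{prop:linkDelProjdel} by exhibiting, in each case, an integer $k\in\{0,\ldots,d_n\}$ for which the defining condition $\eqref{eqn:etoile}$ holds uniformly on $S$. For the first assertion I would simply take $k=0$: the hypothesis that $c_{d_n}$ never vanishes on $S$ is exactly $\eqref{eqn:etoile}$ with $k=0$, the universally quantified conjunct over $j$ being vacuously true (there is no $j$ with $0\le j\le -1$). Proposition \ref{prop:linkDelProjdel} then yields the equivalence immediately, and no connectedness is needed here.

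The second assertion is where the work lies. Since $c_{d_n}$ vanishes identically on $S$, we have $\deg(E_{\textbf{x}}P)<d_n$ for every $\textbf{x}\in S$, so by Lemma \ref{lemma:factorization} and the discussion following it the point $\infty$ is a projective root of $P$ above every $\textbf{x}\in S$. Using the projective delineability of $P$, I would write $Z_{\RP}(P,S)=\bigsqcup_{l=1}^{k}\text{Graph}(\theta_l)$ with each $\theta_l$ of constant multiplicity $m_l\in\N^*$, and then set $C_l=\theta_l^{-1}(\{\infty\})$. Each $C_l$ is closed by continuity of $\theta_l$; and since the graphs are disjoint, at every $\textbf{x}$ exactly one of the values $\theta_l(\textbf{x})$ equals $\infty$ (at least one, because $\infty$ is a root, and at most one by disjointness). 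Hence $S=\bigsqcup_{l=1}^{k}C_l$ is a partition into finitely many closed sets, so each $C_l$ is in fact clopen.

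The crux is now to invoke the connectedness of $S$: it forces exactly one $C_{l_0}$ to equal $S$ and all others to be empty, so that $\theta_{l_0}\equiv\infty$ on $S$. The constant multiplicity $m_{l_0}$ of this root function equals $d_n-\deg(E_{\textbf{x}}P)$ for every $\textbf{x}$, whence $\deg(E_{\textbf{x}}P)=d_n-m_{l_0}$ is constant on $S$. Taking $k=m_{l_0}$, this is precisely condition $\eqref{eqn:etoile}$ (cf.\ Remark \ref{rem:etoile}), and Proposition \ref{prop:linkDelProjdel} then delivers the delineability of $P$. I expect the main obstacle to be exactly this step of pinning down a single fixed $k$ valid across all of $S$: projective delineability alone only guarantees that each individual root function has constant multiplicity, and without connectedness the multiplicity of $\infty$ above $\textbf{x}$ could in principle jump between the various values $m_l$ at different points of $S$. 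It is the clopen decomposition argument, together with connectedness, that rules this out and isolates the single $\infty$-valued root function with its well-defined constant multiplicity.
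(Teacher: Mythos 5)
Your proof is correct and follows essentially the same route as the paper: item (1) is the case $k=0$ of Proposition \ref{prop:linkDelProjdel}, and item (2) uses connectedness to force one projective root function to be identically $\infty$ with constant multiplicity, whence Condition \eqref{eqn:etoile} holds and the proposition applies. The only difference is that you spell out the clopen-partition argument (the sets $C_l=\theta_l^{-1}(\{\infty\})$ partition $S$ into finitely many closed, hence clopen, sets) which the paper's proof leaves implicit.
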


\begin{proof}
    The first item is the particular case $k = 0$ of Proposition \ref{prop:linkDelProjdel}. For the second item, the projective delineability of $P$ on the connected set $S$  implies that there is a projective root function, with a constant multiplicity $k$ on $S$, that is identically equal to $\infty$ on $S$. By Remark \ref{rem:etoile}, Condition \eqref{eqn:etoile} is then satisfied. From Proposition \ref{prop:linkDelProjdel} we deduce the delineability of $P$ on $S$.
\end{proof}

Corollary \ref{cor:linkDelProjdel} ($2$) fails without the connectedness assumption. A counter example is given by the polynomial 
$$P(x_1,x_2) = (x_1x_2-1)\left((x_1-1)x_2-1\right)^2$$
and the set $S = \{0,1\}$.

While projective delineability is not a theoretically weaker property than delineability, the corresponding projection  (discriminants and resultants only, see below) 
that ensures projective delineability is a proper subset of the reduced McCallum projection to ensure delineability \cite{Brown2001}.  Thus projective delineability offers computational savings.

\section{Local Result}

We prove a first projective analogue of the classical results concerning delineability \cite{collins1975, McCallum1998, Brown2001}. This result is local since it only holds in a suitable neighbourhood of each point in a connected analytic submanifold of $\R^{n-1}$. 

\begin{theorem}\label{thrm:local}
    Consider $S$ a connected analytic submanifold of $\R^{n-1}$ and  $P \in \R[\textbf{x};x_n]$ a polynomial of degree $d_n$ in $x_n$ that never nullifies on $S$, such that $\text{Disc}_{x_n}^{d_n}(P)$ is not the zero polynomial and is order-invariant on $S$. For each $\textbf{s} \in S$, there exists a connected neighbourhood $N_{\textbf{s}}$ of $\textbf{s}$ in $S$ such that $P$ is projectively delineable on $N_{\textbf{s}}$ and $H^{d_n}(P)$ is order-invariant on each projective $P-$section on $N_{\textbf{s}}$.
\end{theorem}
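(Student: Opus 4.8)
The plan is to reduce the statement to the classical affine delineability theorem of McCallum \cite{McCallum1998} by a projective change of coordinates that pushes the troublesome point at infinity away from $\textbf{s}$. Fix $\textbf{s}\in S$. Since $P$ never nullifies on $S$, the evaluation $E_{\textbf{s}}P$ is a nonzero polynomial and hence has at most $d_n$ projective roots in $\RP$; as $\RP$ is infinite I can pick a point not among them and choose $A\in GL(2,\R)$ whose first column represents this point. By \eqref{eq:A*P}, the coefficient of $x_n^{d_n}$ in $\tilde P := A^{*d_n}P$ at a point $\textbf{x}$ equals $H^{d_n}(E_{\textbf{x}}P)$ evaluated at the first column of $A$, which vanishes exactly when that column is a projective root above $\textbf{x}$. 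By construction it is nonzero at $\textbf{s}$, and being a polynomial in $\textbf{x}$ it remains nonzero on a connected open neighbourhood $N_{\textbf{s}}$ of $\textbf{s}$ in $S$ (an open subset of an analytic submanifold is again one). Thus on $N_{\textbf{s}}$ the polynomial $\tilde P$ has degree exactly $d_n$ with non-vanishing leading coefficient.

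Next I verify that $\tilde P$ satisfies the hypotheses of McCallum's theorem on $N_{\textbf{s}}$. It never nullifies there, since $\iota^{*d_n}$ and $R_A$ are bijective and so $E_{\textbf{x}}\tilde P = A^{*d_n}(E_{\textbf{x}}P)$ is nonzero whenever $E_{\textbf{x}}P$ is. By \Cref{prop:GKZ} in its multivariate form, $\Disc^{d_n}(\tilde P)=\det(A)^{d_n(d_n-1)}\Disc^{d_n}(P)$, a nonzero constant multiple of $\Disc^{d_n}(P)$; hence $\Disc^{d_n}(\tilde P)$ is a nonzero polynomial and is order-invariant on $S$ precisely where $\Disc^{d_n}(P)$ is, and on $N_{\textbf{s}}$ it coincides with the usual discriminant of $\tilde P$ because the leading coefficient does not vanish there. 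The classical theorem then yields that $\tilde P$ is analytic delineable on $N_{\textbf{s}}$ with continuous real root functions $\eta_1,\dots,\eta_k:N_{\textbf{s}}\to\R$, and that $\tilde P$ is order-invariant on each of its sections.

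It remains to transport this back through $\mu_A$. As the leading coefficient of $\tilde P$ does not vanish on $N_{\textbf{s}}$, \Cref{cor:linkDelProjdel}(1) makes $\tilde P$ projectively delineable there with projective root functions $\varphi\circ\eta_i$. By \Cref{lemma:linkMult}, $(x_n:y)$ is a projective root of multiplicity $m$ of $\tilde P$ above $\textbf{x}$ iff $\mu_A(x_n:y)$ is one of $P$, so setting $\theta_i=\mu_A\circ\varphi\circ\eta_i$ (continuous, since $\mu_A$ is a homeomorphism of $\RP$) gives $Z_{\RP}(P,N_{\textbf{s}})=(\id\times\mu_A)\,Z_{\RP}(\tilde P,N_{\textbf{s}})=\bigsqcup_{i}\mathrm{Graph}(\theta_i)$, with the $\theta_i$ pairwise disjoint and of constant multiplicity; hence $P$ is projectively delineable on $N_{\textbf{s}}$. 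For the order statement I use $H^{d_n}(\tilde P)=R_A(H^{d_n}(P))=H^{d_n}(P)\circ(\id_{n-1}\times A)$ together with \Cref{prop:order}: part (1) gives $\ord(H^{d_n}(\tilde P),(\textbf{x},v))=\ord(H^{d_n}(P),(\textbf{x},Av))$, and part (3) identifies $\ord(H^{d_n}(\tilde P),(\textbf{x},(\eta_i(\textbf{x}),1)))$ with $\ord(\tilde P,(\textbf{x},\eta_i(\textbf{x})))$. Taking $v=(\eta_i(\textbf{x}),1)$, whose image $Av$ represents $\theta_i(\textbf{x})$, shows that the order of $H^{d_n}(P)$ along the $\theta_i$-section equals the constant order of $\tilde P$ along the $\eta_i$-section, while part (2) guarantees independence of the chosen representative.

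The main obstacle I anticipate is the bookkeeping around the transformation rather than any single hard inequality: one must check that the coordinate change $A$ genuinely converts the projective problem into an affine one near $\textbf{s}$ (the non-vanishing of the transported leading coefficient), and that every invariant in play — multiplicities via $\mu_A$, the discriminant via \Cref{prop:GKZ}, and the several orders via \Cref{prop:order} and $R_A$ — is faithfully carried back and forth. The locality is exactly the price paid for allowing a root at infinity: the neighbourhood $N_{\textbf{s}}$ is forced to be the region where the transported leading coefficient stays nonzero, which in general is strictly smaller than $S$.
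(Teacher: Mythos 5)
Your proposal is correct and follows essentially the same route as the paper: choose $A$ so that its first column avoids the projective roots of $E_{\textbf{s}}P$, pass to $A^{*d_n}P$ on a neighbourhood where its leading coefficient is nonzero, apply McCallum's Theorem 2 there (using \Cref{prop:GKZ} for the discriminant), and transport roots, multiplicities and orders back through $\mu_A$ via \Cref{lemma:linkMult} and \Cref{prop:order}. The only cosmetic differences are that the paper normalizes $\det(A)=1$ so the discriminants coincide exactly, and verifies projective delineability of $P$ directly from the definition rather than routing through \Cref{cor:linkDelProjdel}(1) applied to $A^{*d_n}P$; neither affects the argument.
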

The idea of the proof is to use Theorem 2 of \cite{McCallum1998} for delineability. When the degree in $x_n$ of $E_{\textbf{x}}P$ is not constant on $S$, this result does not apply, and the multiplicity of $\infty$ as a projective root of $E_{\textbf{x}}P$ is not constant on $S$. 
Intuitively, we bypass this issue by using a geometric transformation to modify the zero set of the polynomial $P$ in order to guarantee that it does not reach infinity anymore, as shown in Example \ref{ex:cub-hyp2}. To do so, we replace $P$ by the polynomial $A^{*d_n}P$ on a neighbourhood of $\textbf{s} \in S$ for a well chosen matrix $A$ and ensure that this new polynomial has all the required properties (via Lemma \ref{lemma:key}). We then obtain locally the delineability of $A^{*d_n}P$ and deduce the desired result for $P$. 
\vspace{-0.2cm}
\begin{figure}[h]
    \centering
    \begin{tikzpicture}[scale=0.47]
        \begin{myaxis2}
            \draw[very thick, color=green100,domain=-4.5:4.5, variable=\x] plot ({\x},{-\x^3});
            \draw[very thick, color=red100,domain=0.2:4.5, variable=\x] plot ({\x},{1/\x});
            \draw[very thick, color=red100,domain=-4.5:-0.2, variable=\x] plot ({\x},{1/\x});
            \draw[very thick, color=blue100,domain=-4.5:4.5, variable=\x] plot ({\x},1);

            \draw[thick, color=black50, dashed] (-1,-10) -- (-1,10);
            \draw[thick, color=black50, dashed] (1,-10) -- (1,10);
        \end{myaxis2}

        \node at (12,2) {\includegraphics[width=15em,trim={1cm 3cm 4cm 3cm},clip]{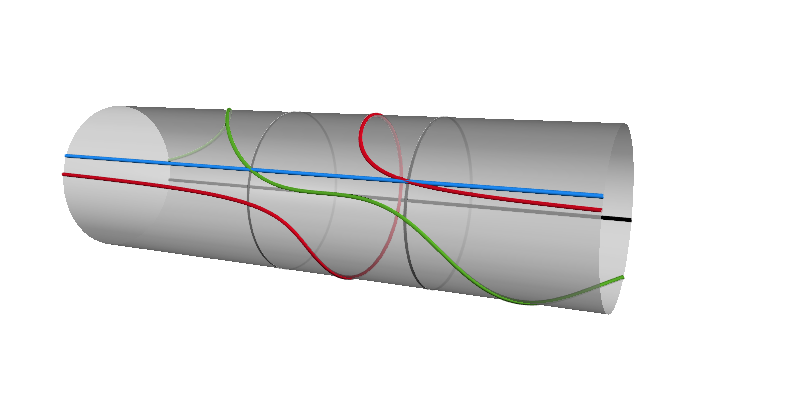}};
        \node[font=\footnotesize] at (10.5,.45)  {$-1$};
        \node[font=\footnotesize] at (11.75,.3)  {$0$};
        \node[font=\footnotesize] at (13,.15)  {$1$};
    \end{tikzpicture}
    \vspace{1em}
    \begin{tikzpicture}[scale=0.47]
        \begin{myaxis2}
            \draw[very thick, color=green100,domain=-4.5:-1.05, variable=\x] plot ({\x},{-\x^3/(1+\x^3)});
            \draw[very thick, color=green100,domain=-0.95:4.5, variable=\x] plot ({\x},{-\x^3/(1+\x^3)});
            \draw[very thick, color=red100,domain=-4.5:0.95, variable=\x] plot ({\x},{1/(\x-1)});
            \draw[very thick, color=red100,domain=1.05:4.5, variable=\x] plot ({\x},{1/(\x-1)});
            \draw[very thick,domain=-4.5:4.5, variable=\x] plot ({\x},-1);

            \draw[thick, color=black50, dashed] (-1,-10) -- (-1,10);
            \draw[thick, color=black50, dashed] (1,-10) -- (1,10);
        \end{myaxis2}

        \node at (12,2) {\includegraphics[width=15em,trim={1cm 3cm 4cm 3cm},clip]{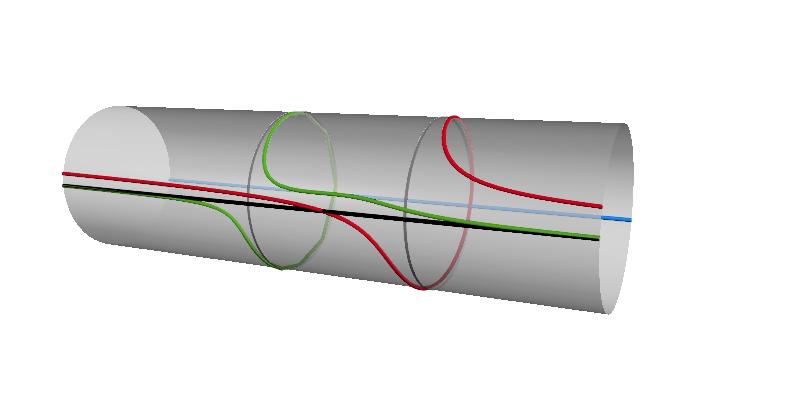}};
        \node[font=\footnotesize] at (10.5,.45)  {$-1$};
        \node[font=\footnotesize] at (11.75,.3)  {$0$};
        \node[font=\footnotesize] at (13,.15)  {$1$};
    \end{tikzpicture}

    \vspace{-2em}
    
    \caption{A visualization of $Z_{\RP}(P,\R)$ with the blue line $x_2 = 1$ (top) and $Z_{\RP}(A^{*2}P,\R)$ with the black line $x_2 = -1$ (bottom).}
    \label{fig:wrap}
    \vspace{-0.3cm}
\end{figure}
\begin{example}\label{ex:cub-hyp2}
Continuing Example \ref{ex:cub-hyp1} and setting $\textbf{s} = 0$, we observe that the root $\theta_1$ goes through $\infty$ exactly above $\textbf{s}$: this is precisely where its leading coefficient vanishes. If we set $A$ to the $2 \times 2$ lower triangular matrix of 1s, then \[A^{*2}P(x_1,x_2) = ((x_1-1) x_2 - 1) ((1+x_1^3)x_2 + x_1^3).\]
The leading coefficient of this polynomial vanishes exactly when $x_1 = \pm1$. In particular, on the open neighbourhood $N_{\textbf{s}} = (-1,1)$ of $\textbf{s}$, $A^{*2}P$ does not admit any root at infinity (see \Cref{fig:wrap}, bottom). The delineability of this polynomial is given by Theorem 2 of \cite{McCallum1998} (or can be checked by hand as in Example \ref{ex:cub-hyp1}). Denoting by $\eta_1$ and $\eta_2$ its real root functions on $N_{\textbf{s}}$, we can show that their compositions with $\varphi_A$ give the two projective root functions of $P$ on $N_{\textbf{s}}$.
\end{example}

\begin{lemma}\label{lemma:key}

Let $P\in \R[\textbf{x};x_n]$ be a nonzero polynomial of degree $d_n$ in $x_n$ and $A \in GL(2,\R)$.

\begin{enumerate} 
    \item The polynomial $A^{*d_n}P \in \R[\textbf{x};x_n]$ has degree less than or equal to $d_n$ in $x_n$ and its coefficient of degree $d_n$ in $x_n$ is $H^{d_n}(P)(\textbf{x}, (a_{11},a_{21}))$.
    \item The map $\mu_A$ is a self-homeomorphism of $\R^{n-1} \times \RP$ such that $\mu_A(Z_{\RP}(\AP)) = Z_{\RP}(P)$, preserving the multiplicities of roots.
    \item The map $\mu_A$ preserves the order. More precisely, 
    \begin{align*}
        \ord(H^{d_n}(A^{*d_n}P)&, (\textbf{x}, (x_n:y))) \\
        &=\ord(H^{d_n}(P), \mu_A(\textbf{x}, (x_n:y)))    
    \end{align*}
    for all $(\textbf{x}, (x_n: y)) \in \R^{n-1} \times \RP.$
    \item If $H^{d_n}(P)(\textbf{s}, (a_{11}, a_{21}))\neq0$ for some $\textbf{s} \in \R^{n-1}$, then the polynomial $A^{*d_n}P$ has degree $d_n$ and the set $V_{\textbf{s}} = \{\textbf{x} \in \R^{n-1} \; | \; H^{d_n}(P)(\textbf{x}, (a_{11}, a_{21}))\neq0\}$ is an open neighbourhood of $\textbf{s}$ such that $A^{*d_n}P$ has no root at infinity above $V_{\textbf{s}}.$
\end{enumerate}
\end{lemma}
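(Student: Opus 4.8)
The plan is to reduce all four items to a single \emph{master identity} relating the homogenization of $A^{*d_n}P$ to a linear change of the last two variables in the homogenization of $P$. Writing $B = \id_{n-1} \times A \in GL(n+1,\R)$ for the block map acting as the identity on $\textbf{x}$ and as $A$ on $(x_n,y)$, I would first record that
\[
H^{d_n}(A^{*d_n}P)(\textbf{x}, (u,v)) = H^{d_n}(P)(\textbf{x}, A(u,v)) = \bigl(H^{d_n}(P) \circ B\bigr)(\textbf{x}, (u,v)),
\]
which is the pointwise extension to $\R[\textbf{x};x_n]$ of the equality $H^{d}(A^{*d}P) = R_A(H^{d}(P))$ obtained in the proof of \Cref{lemma:linkMult} via \Cref{lemma:pullback}. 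For item (1), expanding the explicit formula \eqref{eq:A*P} shows each summand has degree at most $d_n$ in $x_n$, so $A^{*d_n}P$ has degree $\leqslant d_n$; to read off the coefficient of $x_n^{d_n}$ I would use the master identity with $v=1$ and the homogeneity of $H^{d_n}(P)$ of degree $d_n$ in its last two arguments, so that the top-degree-in-$x_n$ part of $H^{d_n}(P)(\textbf{x}, (a_{11}x_n+a_{12}, a_{21}x_n+a_{22}))$ is governed only by the leading directions $a_{11},a_{21}$, giving exactly $H^{d_n}(P)(\textbf{x},(a_{11},a_{21}))$.

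Items (2) and (3) are then mostly bookkeeping about representatives in $\RP$. Since $A \in GL(2,\R)$ has continuous inverse $\mu_{A^{-1}}$, the map $\mu_A$ is a self-homeomorphism of $\RP$, and its extension $\id \times \mu_A$ a self-homeomorphism of $\R^{n-1}\times\RP$. The set equality $\mu_A(Z_{\RP}(A^{*d_n}P)) = Z_{\RP}(P)$ is a direct restatement of the master identity: $(\textbf{x},(x_n:y))$ is a projective root of $A^{*d_n}P$ iff $H^{d_n}(P)(\textbf{x},A(x_n,y))=0$ iff $\mu_A(\textbf{x},(x_n:y))$ is a projective root of $P$, and preservation of multiplicities is precisely \Cref{lemma:linkMult}. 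For the order statement (3), I would apply \Cref{prop:order}(1) to $f = H^{d_n}(P)$ and the matrix $B$, giving $\ord(H^{d_n}(P)\circ B, (\textbf{x},(x_n,y))) = \ord(H^{d_n}(P), B(\textbf{x},(x_n,y)))$; since $B(\textbf{x},(x_n,y)) = (\textbf{x}, A(x_n,y))$ is a representative of $\mu_A(\textbf{x},(x_n:y))$ and \Cref{prop:order}(2) ensures the order is independent of the chosen representative, the asserted equality follows.

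For item (4), item (1) identifies the leading coefficient of $A^{*d_n}P$ with the polynomial $\textbf{x}\mapsto H^{d_n}(P)(\textbf{x},(a_{11},a_{21}))$; its nonvanishing at $\textbf{s}$ shows it is not identically zero, whence $\deg_{x_n}(A^{*d_n}P)=d_n$, while $V_{\textbf{s}}$ is the nonvanishing locus of a continuous function and is therefore open and contains $\textbf{s}$. For each $\textbf{x}\in V_{\textbf{s}}$ we have $\deg(E_{\textbf{x}}(A^{*d_n}P))=d_n$, so by \Cref{lemma:factorization} and the subsequent discussion the multiplicity $d_n-\deg(E_{\textbf{x}}(A^{*d_n}P))$ of the root at infinity equals zero, i.e. there is no root at infinity above $V_{\textbf{s}}$.

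The only genuinely delicate point — the expected main obstacle — is making the master identity watertight across the several layers of extended operators: one must verify that $H^{d_n}$, $A^{*d_n}$ and $R_A$ commute with the pointwise evaluations $E_{\textbf{x}}$ exactly as the extended definitions prescribe, and that \Cref{prop:order}(1) legitimately applies to the block matrix $B=\id_{n-1}\times A$ rather than to a generic element of $GL(n+1,\R)$. Once $H^{d_n}(A^{*d_n}P)=H^{d_n}(P)\circ B$ is secured, all four items collapse to the already-established lemmas, and no computation of substance remains.
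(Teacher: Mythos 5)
Your proposal is correct and follows essentially the same route as the paper: the identity $H^{d_n}(A^{*d_n}P)=R_A(H^{d_n}(P))=H^{d_n}(P)\circ(\id\times A)$ from Lemma \ref{lemma:pullback} drives items (2) and (3) via Lemma \ref{lemma:linkMult} and Proposition \ref{prop:order}, while items (1) and (4) come from the explicit formula \eqref{eq:A*P}. Your added care about representative-independence of the order (via Proposition \ref{prop:order}(2)) and about applying Proposition \ref{prop:order}(1) to the block matrix $\id_{n-1}\times A$ only makes explicit what the paper leaves implicit.
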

\vspace{-0.4cm}
\begin{proof}\mbox{}
\begin{enumerate}
    \item This follows from Equation \eqref{eq:A*P}.
    \item The correspondence of the roots and multiplicities is discussed in Lemma \ref{lemma:linkMult} for univariate polynomials and directly extends to $n$-variate polynomials.
    \item By definition of order and Lemma \ref{lemma:pullback}, we need to show 
    \begin{align*}
        &\ord(R_A \circ H^{d_n}(P), (\textbf{x}, (x_n,y)))\\
        &\qquad = \ord(H^{d_n}(P), (\id \times A)(\textbf{x}, (x_n,y)))
    \end{align*}
     for all $(\textbf{x}, x_n, y) \in \R^{n+1}$.
     Furthermore, we have $R_A \circ H^{d_n}(P) =  H^{d_n}(P) \circ (\id \times A).$ The conclusion follows from the first item of Proposition \ref{prop:order}.
    \item The result concerning the degree of $A^{*d_n}P$ follows from the first assertion. The continuity of the map $\textbf{x} \mapsto H^{d_n}(P)(\textbf{x}, (a_{11}, a_{21}))$ implies that $V_\textbf{s}$ is an open neighbourhood of $\textbf{s}$. The leading coefficient (of degree $d_n$) of $A^{*d_n}P$ is $H^{d_n}(P)(\textbf{x}, (a_{11}, a_{21}))$ and therefore does not vanish above $V_\textbf{s}$. \qedhere
\end{enumerate}
\end{proof}

\begin{proof}[Proof of Theorem \ref{thrm:local}]
    Fix $\textbf{s} \in S$. Since $E_\textbf{s}(P)$ is not the zero polynomial, we can choose $(a_{11},a_{21}) \in \R^2_*$ such that $H^{d_n}(P)(\textbf{s}, (a_{11},a_{21})) \neq 0$ and then $a_{12}, a_{22} \in \R$ such that $\det(A)=1$.
    The fourth assertion of Lemma \ref{lemma:key} guarantees the existence of a neighbourhood $V_{\textbf{s}}$ of $\textbf{s}$ in $\R^{n-1}$ such that $A^{*d_n}P$ has no root at infinity above $V_{\textbf{s}}$. Then $N_{\textbf{s}}=V_{\textbf{s}}\cap S$ is an open subset of $S$ (hence an analytic submanifold), which we may assume to be connected, since $S$ is locally connected. We now apply the delineability result to $A^{*d_n}P$ above $N_{\textbf{s}}$ (Theorem 2 of \cite{McCallum1998}). To this aim, we first show that the hypotheses of this result are satisfied:  
    \begin{enumerate}
        \item The leading coefficient of $A^{*d_n}P$ (i.e. the coefficient of degree $d_n$) does not vanish on $N_{\textbf{s}}$.
        \item The discriminant $\Disc^{d_n}_{x_n}(A^{*d_n}P)$ equals $\Disc^{d_n}_{x_n}(P)$ by \Cref{prop:GKZ}. It is therefore not the zero polynomial and is order-invariant on $N_{\textbf{s}}$ by assumption. 
    \end{enumerate}
    Thus $A^{*d_n}P$ is delineable on $N_{\textbf{s}}$ and is order-invariant in each $A^{*d_n}P$-section over $N_{\textbf{s}}$, i.e. there exist continuous functions $\eta_1< \cdots < \eta_k: N_{\textbf{s}} \to \R$ such that:
    \begin{enumerate}
        \item $Z_{\R}(A^{*d_n}P, N_{\textbf{s}}) = \text{Graph}(\eta_1) \sqcup \ldots \sqcup \text{Graph}(\eta_k)$;
        \item the multiplicity of $\eta_l(\textbf{x})$ as a root of $E_{\textbf{x}}(A^{* d_n}P)$ is constant on $N_{\textbf{s}}$; and
        \item  the order of $A^{*d_n}P$ at $(\textbf{x}, \eta_l(\textbf{x})))$ is constant on $N_{\textbf{s}}$.
    \end{enumerate}
    Hence, by assertion 2 of Lemma \ref{lemma:key}, the continuous maps $\theta_l = \varphi_A \circ \eta_l = \mu_A \circ \varphi \circ \eta_l$ with $l \in \{1,\ldots, k\}$ are precisely the projective root functions of $P$.
    Indeed, we check that these functions satisfy the requirements of Definition \ref{def:projdel}:
    \begin{enumerate}
        \item For all $\textbf{x} \in N_{\textbf{s}}$, the points $\theta_1(\textbf{x}),\ldots,\theta_k(\textbf{x})$ are distinct by injectivity of $\mu_A \circ \varphi$.
        \item We have 
        $\varphi(Z_\R(A^{* d_n}P, N_{\textbf{s}})) = Z_{\RP}(A^{* d_n}P, N_{\textbf{s}})$ since $A^{* d_n}P$ has no root at infinity above $N_{\textbf{s}}$.  Hence, 
        \begin{align*}
            Z_{\RP}(P, N_{\textbf{s}}) &= \mu_A(Z_{\RP}(A^{* d_n}P, N_{\textbf{s}})) \\
            &= \text{Graph}(\theta_1) \cup \ldots \cup \text{Graph}(\theta_k).
        \end{align*}
        \item For all $\textbf{x} \in N_{\textbf{s}}$, $\varphi (\eta_l(\textbf{x}))$ is a projective root of $A^{* d_n}P$ of  multiplicity $m_l$ by Lemma \ref{lemma:factorization} and the subsequent discussion. Furthermore, Lemma \ref{lemma:key} guarantees that these multiplicities are preserved by $\mu_A$, that is $\theta_l(\textbf{x})$ is a projective root of multiplicity $m_l$ of $P$.
    \end{enumerate}
    The third assertion of Lemma \ref{lemma:key} and Proposition \ref{prop:order} give the conclusion concerning the order-invariance of $H^{d_n}(P)$ on the projective $P$-sections.
\end{proof}

\section{Global result and coverings}

In this section, we prove a global counterpart of Theorem \ref{thrm:local}, finding this requires some additional assumptions which are classical in the theory of covering spaces \cite{Fulton}. 
\begin{definition}\label{def:covering}
Let $X$ be a topological space. A \textit{covering space} of $X$ is a topological space $Y$ together with a continuous map $p: Y \to X$ such that, 
for each point $x \in X$, there exist an open neighbourhood $U$ of $x$ in $X$ and a non-empty family of disjoint open sets $(V_i)_{i\in I}$ of $Y$ such that $p^{-1}(U)=\sqcup_{i\in I}V_i$ and $p|_{V_i}:V_i\to U$ is a homeomorphism. The covering space $Y$ is trivial if these conditions hold for $U = X$.
\end{definition}

\begin{remark}
    Note that in this setting, defining for $i\in I$ the \emph{local section} $s_i=(p|_{V_i})^{-1}$ we obtain a family of continuous functions $(s_i: U \to Y)_{i \in I}$, such that $p \circ s_i = \text{id}_{U}$, $s_i(U)$ is open in $p^{-1}(U)$, and $p^{-1}(U)$ is the union of these sets.
\end{remark}
Recall that a topological space is simply connected if it is path connected (and thus connected), and for any $x\in X$, any loop of $X$ based at $x$ can be  continuously deformed (preserving the base point) onto the constant loop. 

\begin{theorem}\label{thrm:global}
Under the hypotheses of Theorem \ref{thrm:local}, and the further assumption that $S$ is simply connected, we have that $P$ is projectively delineable on $S$ and  $H^{d_n}(P)$ is order-invariant on each projective $P$-section over $S$.
\end{theorem}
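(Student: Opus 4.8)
The plan is to upgrade the local result of Theorem~\ref{thrm:local} to a global statement by exploiting the covering space machinery of Definition~\ref{def:covering}. The starting observation is that Theorem~\ref{thrm:local} already furnishes, for every $\textbf{s} \in S$, a connected open neighbourhood $N_{\textbf{s}}$ over which $P$ is projectively delineable with a fixed number $k_{\textbf{s}}$ of continuous projective root functions (counted with multiplicity). First I would argue that $k_{\textbf{s}}$ and the associated multiplicity data are \emph{locally constant}, hence constant on the connected set $S$: two overlapping neighbourhoods must agree on the count and multiplicities of projective roots over any common point, so all the local $k_{\textbf{s}}$ coincide with a single $k$. This lets me form the total space $Y = Z_{\RP}(P,S)$ together with the projection $p : Y \to S$ induced by $(\textbf{x},(x_n:y)) \mapsto \textbf{x}$.

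The heart of the argument is to verify that $p : Y \to S$ is a genuine covering map in the sense of Definition~\ref{def:covering}. Over each $N_{\textbf{s}}$, the local projective delineability gives $p^{-1}(N_{\textbf{s}}) = \bigsqcup_{l=1}^{k} \text{Graph}(\theta_l^{\textbf{s}})$, and each graph maps homeomorphically onto $N_{\textbf{s}}$ via $p$ with inverse $\textbf{x} \mapsto (\textbf{x},\theta_l^{\textbf{s}}(\textbf{x}))$; the graphs are disjoint and open in $p^{-1}(N_{\textbf{s}})$ because the $\theta_l^{\textbf{s}}(\textbf{x})$ are distinct points of the Hausdorff space $\RP$ varying continuously. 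This exhibits the trivialising neighbourhoods required of a covering. Once $p$ is known to be a covering of the simply connected base $S$, I invoke the standard classification of coverings over a simply connected (and, via the manifold hypothesis, locally nice) base: such a covering must be trivial, i.e. $Y \cong S \times F$ for the discrete fibre $F$ of cardinality $k$, and admits $k$ global continuous sections $\theta_1,\ldots,\theta_k : S \to \RP$ whose graphs partition $Y = Z_{\RP}(P,S)$. These global sections are precisely the projective root functions demanded by Definition~\ref{def:projdel}, and the multiplicity condition transfers from the local statement because multiplicity is locally constant along each section and $S$ is connected.

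It remains to globalise the order-invariance of $H^{d_n}(P)$ on each projective $P$-section. Here I would use that the order of $H^{d_n}(P)$ along a section is a locally constant integer-valued function: Theorem~\ref{thrm:local} guarantees it is constant on each $N_{\textbf{s}}$-piece of the section, and since each global section $\text{Graph}(\theta_l)$ is connected (being homeomorphic to the connected base $S$ through $p$), the order must be globally constant on it. Thus $H^{d_n}(P)$ is order-invariant on each projective $P$-section over all of $S$, completing the proof.

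The main obstacle I anticipate is the careful verification that $p$ is a covering map in full, rather than merely a local homeomorphism: I must ensure the fibre cardinality is globally constant (handled by the local-constancy/connectedness argument above) and that the local sections over overlapping neighbourhoods patch into a coherent trivialisation, which is exactly what the covering-space formalism is designed to package. A subtlety worth flagging is that, unlike the real-valued delineability case where root functions can be globally ordered $\eta_1 < \cdots < \eta_k$, the circle $\RP$ carries no global order, so I cannot label the sections by height; the sections may genuinely permute as one traverses $S$, and it is precisely the simple connectedness hypothesis that rules out nontrivial monodromy and forces the covering to split into $k$ globally defined sheets. Establishing that this monodromy is trivial is the conceptual crux, and it is what the added hypothesis over Theorem~\ref{thrm:local} buys us.
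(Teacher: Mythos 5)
Your proposal is correct and follows essentially the same route as the paper: exhibit $p : Z_{\RP}(P,S) \to S$ as a covering map using the local trivialisations supplied by Theorem~\ref{thrm:local}, invoke triviality of coverings over a simply connected base to obtain global sections, and then globalise the multiplicity and order data by local constancy plus connectedness. The only cosmetic difference is that the paper justifies openness of the graphs inside $p^{-1}(N_{\textbf{s}})$ by noting each graph is closed (as the graph of a continuous map into the Hausdorff space $\RP$) and hence the complement of the finitely many others, which is a cleaner version of your ``distinct points varying continuously'' remark.
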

\begin{proof}
    Theorem \ref{thrm:local} guarantees that the set $Z_{\RP}(P,S)$ is a covering space of $S$ for the usual (continuous) projection
        \[p: Z_{\RP}(P,S) \to S: (\textbf{x},(x_n:y)) \to \textbf{x}.\]
    Indeed, with the notation of Theorem \ref{thrm:local}, for all $\textbf{s} \in S$, projective delineability of $P$ over the open neighbourhood $N_{\textbf{s}}$ implies that $p^{-1}(N_{\textbf{s}})=Z_{\RP}(P, N_{\textbf{s}})$ is the disjoint union $\sqcup_{i=1}^k\text{Graph}(\theta_i)$ where $\theta_1,\ldots,\theta_k$ are the projective root functions from $N_{\textbf{s}}$ to $\RP$.
    Then for all $i \in \{1, \ldots, k\}, V_i=\text{Graph}(\theta_i)$ is homeomorphic to $N_{\textbf{s}}$ through $p$ and $\mathrm{Id}\times\theta_i$. Finally, since $\theta_i$ is a continuous function from $N_{\textbf{s}}$ to the Hausdorff space $\RP$, its graph is closed in $N_{\textbf{s}} \times \RP$, hence in $p^{-1}(N_{\textbf{s}}) \subset N_{\textbf{s}} \times \RP$. Thus $V_i=p^{-1}(N_{\textbf{s}})\setminus\cup_{j \neq i}V_j$ is open in  $p^{-1}(N_{\textbf{s}})$, and therefore in $Z_{\RP}(P,S)$. 

    Since $S$ is a simply connected analytic submanifold, any covering space over $S$ is trivial \cite[Corollary 13.8]{Fulton}, so there exist global sections $s_1, \ldots, s_k$ of $p: Z_{\RP}(P,S) \to S$. These functions read $s_i(\textbf{x})=(\textbf{x},t_i(\textbf{x}))$ and $t_i$ are by definition continuous projective root functions globally defined on $S$, such that $Z_{\RP}(P,S)$ is the disjoint union of their graphs. Finally for every $\textbf{s}\in S$ the function $t_i$ coincides over $N_{\textbf{s}}$ with one of the functions $\theta_1,\ldots\theta_k$. Therefore, the multiplicity of $t_i$ as a projective root of $E_{\textbf{s}}(P)$ is constant on $N_{\textbf{s}}$ as is the order of $H^{d_n}(P)$ at $t_i$ on $N_{\textbf{s}}$. We conclude by the connectedness of $S$. 
\end{proof}

\begin{remark}
    Observe that the projective root functions in Theorems \ref{thrm:local} and \ref{thrm:global} are analytic functions. Indeed, their local expressions in suitable (analytic) charts are exactly the root functions given by Theorem 2 of \cite{McCallum1998}.
\end{remark}
\noindent Theorem \ref{thrm:global} fails without the additional assumption:
\begin{proposition}\label{prop:counter}
    There exist a  connected analytic submanifold $S$ of $\R^{2}$ and  $P \in \R[x_1,x_2;x_3]_{\leqslant 4}$ such that 
    \begin{enumerate}
        \item $P$ is never nullified on $S$;
        \item the discriminant $\Disc_{x_3}^4(P)$ of $P$ is not the zero polynomial of $\R[x_1,x_2]$ and is order-invariant on $S$;
        \item $P$ is not projectively delineable over $S$.
    \end{enumerate}
\end{proposition}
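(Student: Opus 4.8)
The plan is to exploit precisely the hypothesis that Theorem \ref{thrm:global} adds, namely simple connectedness. By Theorem \ref{thrm:local}, the projection $Z_{\RP}(P,S)\to S$ is always a covering space, so the only obstruction to global projective delineability is that this covering can be \emph{nontrivial}. I would therefore take $S$ to be a loop and engineer $P$ so that the monodromy of its projective roots around $S$ is a nontrivial permutation, while keeping all zeros of the discriminant strictly off $S$ so that conditions (1) and (2) still hold.

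Concretely, I would let $S$ be the unit circle $x_1^2+x_2^2=1$, parametrized by $\gamma(t)=(\cos t,\sin t)$, and prescribe the four projective roots above $\gamma(t)$ to be the directions $\theta_k(t)=(\cos\phi_k(t):\sin\phi_k(t))$ with $\phi_k(t)=\tfrac{t}{4}+\tfrac{k\pi}{4}$, $k=0,1,2,3$. The binary form vanishing at these four directions is $\prod_{k=0}^3(\sin\phi_k\,x_3-\cos\phi_k\,y)$, and using the identity $\prod_{k=0}^3\sin(u+\tfrac{k\pi}{4})=\tfrac18\sin(4u)$ together with $\operatorname{Re}((x_3+iy)^4)=x_3^4-6x_3^2y^2+y^4$ and $\operatorname{Im}((x_3+iy)^4)=4x_3^3y-4x_3y^3$, this form becomes, up to a scalar, $x_2\operatorname{Re}((x_3+iy)^4)-x_1\operatorname{Im}((x_3+iy)^4)$. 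Dehomogenizing ($y=1$) yields the explicit candidate
\[
P=x_2x_3^4-4x_1x_3^3-6x_2x_3^2+4x_1x_3+x_2\in\R[x_1,x_2;x_3]_{\leqslant4}.
\]

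It then remains to verify the three properties. For (1), the coefficient vector $(x_2,-4x_1,-6x_2,4x_1,x_2)$ vanishes only at $(x_1,x_2)=(0,0)\notin S$, so $E_{\mathbf x}(P)$ is never the zero polynomial on $S$. For (2), the four prescribed directions are equally spaced by $\pi/4$, hence pairwise distinct modulo $\pi$ for every $t$; thus $H^4(P)$ has four distinct simple projective roots above every point of $S$, so $\Disc^4_{x_3}(P)$ (which detects repeated projective roots via Lemma \ref{lemma:factorization}) is a nonzero polynomial and is nowhere zero on $S$, hence trivially order-invariant there. For (3), I would argue by monodromy: the continuous branches satisfy $\theta_k(t+2\pi)=\theta_{k+2\bmod4}(t)$, since increasing $t$ by $2\pi$ raises each $\phi_k$ by $\pi/2=2\cdot\tfrac{\pi}{4}$. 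Assuming $Z_{\RP}(P,S)$ were a disjoint union of graphs of continuous $\vartheta_j:S\to\RP$, each $\vartheta_j\circ\gamma$ would be continuous, $2\pi$-periodic, and pointwise equal to one of the four distinct branches; distinctness forces it to coincide with a single branch $\theta_k$ on all of $\R$, contradicting periodicity because $\theta_k(t+2\pi)=\theta_{k+2}(t)\neq\theta_k(t)$. Hence $P$ is not projectively delineable on $S$.

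I expect the main obstacle to be the simultaneous fulfilment of (2) and (3): nontrivial monodromy normally arises from roots colliding, which would create discriminant zeros and jeopardize order-invariance. The design choice that resolves this is to place all coincidences of projective roots strictly \emph{inside} the disk bounded by $S$, so that traversing $S$ encircles them and produces the permutation $(0\,2)(1\,3)$ while the roots stay simple and distinct on $S$ itself. A secondary technical point to confirm carefully is that $\Disc^4_{x_3}(P)$ is genuinely nonzero as an element of $\R[x_1,x_2]$, and that the entire root/monodromy analysis is carried out with respect to the fixed degree $d_n=4$, so that the root at infinity appearing where $x_2=0$ is treated on the same footing as the finite roots.
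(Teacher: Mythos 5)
Your construction is correct and establishes the proposition, but it differs from the paper's in both the witness polynomial and the concluding argument. The paper also takes $S$ to be the unit circle and also exploits roots rotating at a quarter of the speed of the base point, but its polynomial $(1-x_1)x_3^4+4x_2x_3^3+(2+6x_1)x_3^2-4x_2x_3+(1-x_1)$ has exactly \emph{two} projective roots above each point of $S$, each of multiplicity two; consequently $\Disc^4_{x_3}(P)=2^{14}(x_1^2+x_2^2-1)^2(x_1^2+x_2^2)$ vanishes on $S$ with constant order $2$, which must be checked explicitly, and non-delineability follows from a connectedness argument: $Z_{\RP}(P,S)$ is the image of $\R$ under a single continuous map, hence connected, whereas a disjoint union of two non-empty closed graphs cannot be. Your variant with four \emph{simple} roots makes condition (2) essentially free (the discriminant is nowhere zero on $S$, hence trivially order-invariant), at the price of a slightly more elaborate conclusion for (3): since your monodromy is $(0\,2)(1\,3)$, the set $Z_{\RP}(P,S)$ has two connected components, so the paper's connectedness argument would not apply verbatim, and you correctly replace it by the periodicity/branch-selection argument (each continuous selection among finitely many pairwise-disjoint continuous branches over the connected set $\R$ must equal a single branch, contradicting $2\pi$-periodicity). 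Both routes are sound; yours trades an explicit discriminant computation for a standard covering-space style monodromy argument, and your worry about reconciling (2) with (3) is resolved exactly as you say: the root collisions sit at the origin and on $S$ itself in the paper's example, but strictly inside the disk in yours.
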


\begin{proof}
    We define $S= \{(x_1,x_2) \in \R^2 \; | \; x_1^2 + x_2^2 - 1 = 0\}$ and $P\in\R[x_1,x_2;x_3]_{\leqslant 4}$ such that $P(x_1,x_2,x_3)$ is given by
    \[(1 - x_1)x_3^4  +4x_2 x_3^3  + (2 + 6 x_1)x_3^2  -4 x_2 x_3  + (1 - x_1).\]
    See Figure \ref{fig:counter}. Then $P$ satisfies the requested properties: it is never nullified on $S$ and we compute that
    \[\Disc^{4}_{x_3}(P)(x_1,x_2) = 2^{14} (-1 + x_1^2 + x_2^2)^2 (x_1^2 + x_2^2).\]
    Elementary computations show that it has order $2$ on $S$.
    
    To prove that $P$ is not projectively delineable over $S$, we first compute the set of projective roots of $P$ above every $\textbf{x}=(x_1,x_2)\in S$. If we write $\textbf{x}=(\cos(t),\sin(t))$ for $t\in\R$ then we can compute that $H^{4}(P)(\textbf{x},(x_3,y))$ is
        \[
       8\left(\cos\left(t/4\right)x_3 - \sin\left(t/4\right) y\right)^2 
       \left(\sin\left(t/4\right)x_3 + \cos\left(t/4\right) y\right)^2,
       \]
    implying that $P$ has exactly two projective roots over $(x_1,x_2)$ of multiplicity two given by 
        \[
        \left(\sin\left(t/4\right): \cos\left(t/4\right)\right) 
        \text{ and } \left(-\cos\left(t/4\right): \sin\left(t/4\right)\right).
        \]
        If we assume that $P$ is projectively delineable over $S$, then there exists two continuous functions $\theta_1, \theta_2: S \to \RP$ such that 
        $Z_{\RP}(P,S)=\text{Graph}(\theta_1) \sqcup \text{Graph}(\theta_2)$. 
        Hence, $Z_{\RP}(P,S)$ is not connected because the graphs $\text{Graph}(\theta_1)$ and $\text{Graph}(\theta_2)$ are non-empty and closed. However $Z_{\RP}(P,S)$ happens to be  connected as the image of $\R$ by the continuous function $\Upsilon : \R \to \R^2 \times \RP$ given by     
        \[ t \mapsto \left(\cos\left(t\right), \sin\left(t\right),\left(\sin\left(t/4\right): \cos\left(t/4\right)\right)\right). \qedhere \]
\end{proof}

 \begin{figure}[h]
    \hspace{1cm}\includegraphics[scale=0.35,trim={0cm 2cm 0cm 0cm},clip]{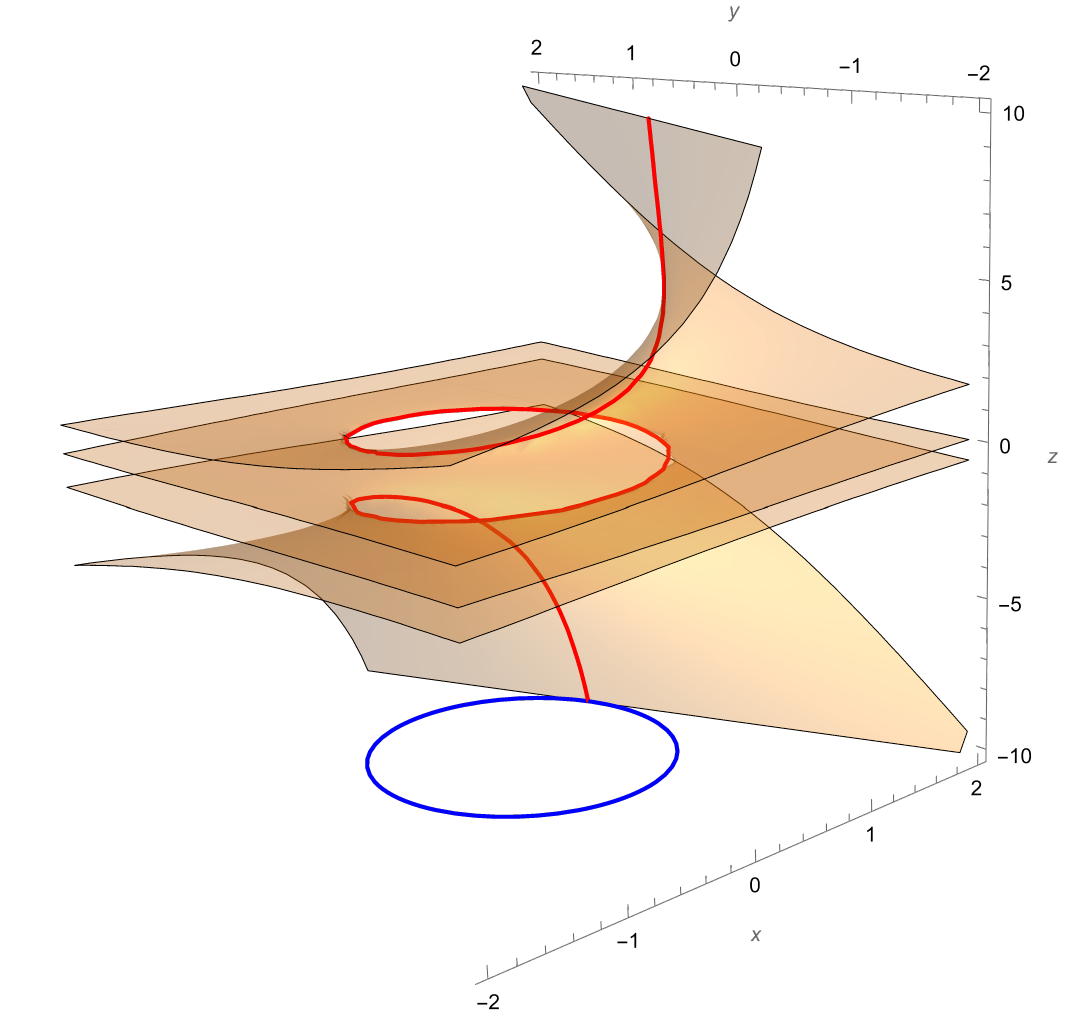}
    \caption{ $S$ (blue), $Z_{\R}(P, \R^2)$ (orange) and  $Z_{\R}(P,S)$ (red)}
    \label{fig:counter}
\end{figure}

We show that Theorem 2.2 of \cite{McCallum1999} extends naturally in the projective framework. In particular, the order-invariance of the resultant of two projectively delineable polynomials on $S$ guarantees the projective delineability of their product on $S$, allowing the notion to generalise from one polynomial to a set, as with classical delineability.

\begin{theorem}
    Let $S$ be a connected analytic submanifold of $\R^{n-1}$ and $P,Q \in \R[\textbf{x};x_n]$ of respective positive degree $p_n$ and $q_n$ in $x_n$. Assume $P, Q$ are never nullified on $S$ and $P$ is projectively delineable over $S$. If $\Res_{x_n}^{p_n, q_n}(P,Q)$ is not the zero polynomial and is order-invariant over $S$, then along each projective $P$-section over $S$, either $H^{q_n}(Q)$ vanishes completely or it never vanishes at all.
\end{theorem}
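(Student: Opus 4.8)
The plan is to reduce the statement locally to the classical result of McCallum (Theorem 2.2 of \cite{McCallum1999}) by means of the projective change of coordinates $A^{*d}$ already exploited in Theorem \ref{thrm:local}, and then to globalize by a connectedness argument. I would first fix one global projective $P$-section, that is the graph of a projective root function $\theta_l : S \to \RP$, and set $Z = \{\textbf{x} \in S \mid H^{q_n}(Q)(\textbf{x}, \theta_l(\textbf{x})) = 0\}$. The goal becomes to show that $Z$ is empty or equal to $S$; since $S$ is connected it suffices to prove that $Z$ is both open and closed, which will follow from a local dichotomy established point by point. The guiding principle is that $\Res_{x_n}^{p_n,q_n}(P,Q)$ vanishes above $\textbf{x}$ precisely when $H^{p_n}(P)$ and $H^{q_n}(Q)$ share a projective root above $\textbf{x}$, so the order-invariance of the resultant is exactly what controls the incidences between the sections of $P$ and the zeros of $Q$.

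For the local step I would fix $\textbf{s} \in S$ and choose $(a_{11}, a_{21}) \in \R^2_*$ avoiding the finitely many projective roots of the two nonzero binary forms $E_{\textbf{s}}(H^{p_n}(P))$ and $E_{\textbf{s}}(H^{q_n}(Q))$ (both nonzero since $P, Q$ never nullify on $S$), then complete it to a matrix $A \in GL(2,\R)$ with $\det(A) = 1$. By the first and fourth assertions of Lemma \ref{lemma:key}, there is an open connected neighbourhood $N_{\textbf{s}}$ of $\textbf{s}$ in $S$ on which both $A^{*p_n}P$ and $A^{*q_n}Q$ attain their full degrees and admit no root at infinity. Projective delineability of $P$ restricts to $N_{\textbf{s}}$ and transfers through the multiplicity-preserving homeomorphism $\mu_{A^{-1}}$ (Lemma \ref{lemma:key}(2)) to projective delineability of $A^{*p_n}P$; as this polynomial has no root at infinity on $N_{\textbf{s}}$, Corollary \ref{cor:linkDelProjdel}(1) upgrades it to ordinary delineability. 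Moreover, Proposition \ref{prop:GKZ} gives $\Res_{x_n}^{p_n,q_n}(A^{*p_n}P, A^{*q_n}Q) = \det(A)^{p_nq_n} \Res_{x_n}^{p_n,q_n}(P,Q)$, still a nonzero polynomial that is order-invariant on $N_{\textbf{s}}$, and $A^{*d}$ being a linear isomorphism ensures $A^{*p_n}P$ and $A^{*q_n}Q$ never nullify either. Thus all the hypotheses of McCallum's Theorem 2.2 are met on $N_{\textbf{s}}$.

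Applying that theorem, along each real $(A^{*p_n}P)$-section $\eta_l$ over $N_{\textbf{s}}$ the polynomial $A^{*q_n}Q$ either vanishes identically or never vanishes. I would then transport this dichotomy back to $P$ and $Q$. Using $H^{q_n}(A^{*q_n}Q) = R_A(H^{q_n}(Q)) = H^{q_n}(Q) \circ (\id \times A)$ (as in the proof of Lemma \ref{lemma:linkMult}, via Lemma \ref{lemma:pullback}) together with $\theta_l = \mu_A \circ \varphi \circ \eta_l$, a short computation shows that $H^{q_n}(Q)(\textbf{x}, \theta_l(\textbf{x})) = 0$ if and only if $(A^{*q_n}Q)(\textbf{x}, \eta_l(\textbf{x})) = 0$ on $N_{\textbf{s}}$, the relating factors being nonzero there. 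Consequently, on $N_{\textbf{s}}$ the function $\textbf{x} \mapsto H^{q_n}(Q)(\textbf{x}, \theta_l(\textbf{x}))$ is either identically zero or nowhere zero.

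Finally I would globalize: if $\textbf{s} \in Z$ the local alternative on $N_{\textbf{s}}$ must be ``identically zero'', so $N_{\textbf{s}} \subseteq Z$ and $Z$ is open; if $\textbf{s} \notin Z$ it must be ``nowhere zero'', so $N_{\textbf{s}} \cap Z = \emptyset$ and $Z$ is closed. As $S$ is connected, $Z = \emptyset$ or $Z = S$, which is exactly the desired conclusion that $H^{q_n}(Q)$ either never vanishes or vanishes completely along $\theta_l$. I expect the main obstacle to be the local reduction carried out in the two middle paragraphs: one must verify carefully that projective $P$-sections correspond bijectively to the real sections of $A^{*p_n}P$ and, crucially, that the vanishing of $H^{q_n}(Q)$ along a projective section matches exactly the vanishing of $A^{*q_n}Q$ along the corresponding real section, while simultaneously checking that delineability, nonvanishing, and the order-invariance and nonvanishing of the resultant all survive the change of coordinates. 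The connectedness argument of the last paragraph is then routine.
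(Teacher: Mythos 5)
Your proposal is correct and follows essentially the same route as the paper: choose $A$ so that $H^{p_n}(P)H^{q_n}(Q)(\textbf{s},(a_{11},a_{21}))\neq 0$, pass to $A^{*p_n}P$ and $A^{*q_n}Q$ (which have no root at infinity near $\textbf{s}$, so projective delineability descends to ordinary delineability), invoke Proposition~\ref{prop:GKZ} to preserve the resultant, apply McCallum's Theorem~2.2 locally, transport the dichotomy back via $\mu_A$, and conclude by connectedness. The only cosmetic difference is that you phrase the global step as ``$Z$ is open and closed'' by running the local analysis at every point, whereas the paper takes $\textbf{s}$ in the closure of $\Omega_l$ and shows the closure lies in the interior; these are the same argument.
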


\begin{proof}
    By assumption, $P$ is projectively delineable over $S$ and we denote by $\theta_1, \ldots, \theta_k : S \to \RP$ its projective root functions over $S$. For every $l \in \{1,\ldots,k\}$, we consider 
        \[\Omega_l = \{\textbf{x} \in S \; | \; (\textbf{x},\theta_l(\textbf{x})) \in Z_{\RP}(Q,S)\}\]
    and we have to prove that this set is either empty or equal to $S$. We do it by using similar arguments as in Theorem \ref{thrm:local} and the connectedness of $S$. We may suppose that $\Omega_l$ is non-empty and take $\textbf{s}$ in its closure. 

    Since $E_\textbf{s}(PQ) = E_\textbf{s}(P)E_\textbf{s}(Q)$ is nonzero, we can choose $(a_{11},a_{21}) \in \R^2_*$ such that $H^{p_n}(P)H^{q_n}(Q)(\textbf{s}, (a_{11},a_{21})) \neq 0$ and then $a_{12}, a_{22} \in \R$ such that $\det A=1$. We then obtain a neighbourhood $N_{\textbf{s}}$ of $\textbf{s}$ as in the beginning of the proof of Theorem \ref{thrm:local} (but for the polynomial $PQ$). The projective root functions of $P$ above $N_{\textbf{s}}$ are given by the restrictions of $\theta_i, i \in \{1, \ldots, k\}$ to $N_{\textbf{s}}$. By Lemma \ref{lemma:linkMult} the projective root functions of $A^{*p_n}P$ above $N_{\textbf{s}}$
    are given by the continuous functions $\mu_{A^{-1}}\circ \theta_i$. Hence $A^{*p_n}P$ is projectively delineable over $N_{\textbf{s}}$ and has, by the choice of $A$, no root at infinity. By Proposition \ref{prop:linkDelProjdel}, we obtain the delineability of $A^{*p_n}P$ and denote by $\eta_i = \varphi^{-1} \circ \mu_{A^{-1}} \circ \theta_i, i \in \{1, \ldots, k\}$ its root functions.
    We now check the hypotheses of Theorem 2.2 of \cite{McCallum1999}, with $A^{*p_n}P$ and $A^{*q_n}Q$:
    \begin{enumerate}
        \item They have respective degree $p_n$ and $q_n$ in $x_n$ by the choice of $A$. Hence their degrees are positive.
        \item By Proposition \ref{prop:GKZ}, $\Res^{p_n,q_n}(A^{*p_n}P,A^{*q_n}Q)=\Res^{p_n,q_n}(P,Q)$, which is therefore not the zero polynomial and order-invariant over the connected analytic submanifold $N_{\textbf{s}}$.
    \end{enumerate}
    In particular, we conclude that $A^{*q_n}Q$ is sign invariant on the graph of $\eta_l$ over $N_{\textbf{s}}$. Since $\textbf{s}$ is in the closure of $\Omega_l$, there exists $\textbf{s}_0 \in N_{\textbf{s}} \cap \Omega_l$. Then $\theta_l(\textbf{s}_0)$ is a projective root of $Q$ above $\textbf{s}_0$, so $\mu_{A^{-1}}\circ\theta_l(\textbf{s}_0)$ is a projective root of $A^{*q_n}Q$ above $\textbf{s}_0$ and finally $A^{*q_n}Q(\textbf{s}_0,\eta_l(\textbf{s}_0))=0$. By the sign invariance $A^{*q_n}Q(\textbf{x},\eta_l(\textbf{x}))=0$ for every $\textbf{x}\in N_{\textbf{s}}$, and it follows that $(\textbf{x},\theta_l(\textbf{x})) \in Z_{\RP}(Q,S)$ for every $\textbf{x}\in N_{\textbf{s}}$. We just showed that $N_{\textbf{s}}\subset\Omega_l$ for all $\textbf{s}$ in the closure of $\Omega_l$, ensuring that the closure of $\Omega_l$ is a subset of the interior of $\Omega_l$. Then $\Omega_l$ is open and closed and therefore $\Omega_l = S$ by connectedness.
\end{proof}

\end{document}